\newcommand\junk[1]{}
\def\ps@pprintTitle{%
  \let\@oddhead\@empty
  \let\@evenhead\@empty
  \def\@oddfoot{\reset@font\hfil\thepage\hfil}
  \let\@evenfoot\@oddfoot
}
\newtheorem{theorem}{Theorem}[section]
\newtheorem{example}[theorem]{Example}
\newtheorem{lemma}[theorem]{Lemma}
\def\Ds{\mathcal{D}}
\def\As{\mathcal{A}}
\begin{document}
\begin{frontmatter}
\title{On infinite-finite duality pairs of directed graphs}
\author[renyi]{P\'eter L. Erd\H os\fnref{elp}}
\author[canada]{Claude Tardif \fnref{tardif}}
\author[renyi]{G\'abor Tardos\fnref{tardos}}
\address[renyi]{Alfr\'ed R{\'e}nyi Institute of Mathematics, Re\'altanoda u 13-15 Budapest,
        1053 Hungary\\
        {\tt email}: $<$elp,tardos$>$@renyi.hu}
\address[canada]{Royal Military College of Canada, PO Box 17000 Station ``Forces'' \\
        Kingston, Ontario, Canada, K7K 7B4\\
        {\tt email}: Claude.Tardif@rmc.ca}
\fntext[elp]{Research supported in part by the Hungarian NSF, under contract NK 78439
    and  K 68262}
\fntext[tardif]{Research supported by grants from NSERC and ARP.}
\fntext[tardos]{Research supported in part by the NSERC grant 329527 and by the Hungarian NSF grants T-046234, AT048826 and NK-62321}

\begin{abstract}
The $(\As,\Ds)$ duality pairs play crucial role in the  theory of general
relational structures and in the Constraint Satisfaction Problem. The case
where both classes are finite is fully characterized. The case when both side
are infinite seems to be very complex. It is also known that no
finite-infinite duality pair is possible if we make the additional restriction
that both classes are antichains. In this paper (which is the first one of a
series) we start the detailed study of the infinite-finite case.

Here we concentrate on directed graphs. We prove some elementary properties of the infinite-finite duality pairs, including lower and upper bounds on the size of $\Ds$, and show that the elements of $\As$ must be equivalent to forests if $\As$ is an antichain. Then we construct instructive examples, where the elements of $\As$ are paths or trees. Note that the existence of infinite-finite antichain dualities was not previously known.
\end{abstract}
\begin{keyword}
graph homomorphism; duality pairs; general relational structures; constraint satisfaction problems; regular languages; nondeterministic finite automaton;
\end{keyword}
\end{frontmatter}

\section{Introduction}
\noindent
In this paper we consider {\em directed graphs}, {\em homomorphisms} between
them, and especially {\em duality pairs}. We start with the definitions.

A directed graph $G$ is a pair $(V,E)$ with $V=V(G)$ the set of vertices and $E=E(G)\subseteq V^2$ the set of (directed) edges. Unless stated otherwise ``graph'' refers to finite directed graphs in this paper. Forgetting about the orientation of the edges one gets the {\em underlying undirected graph}. For simplicity we drop the term ``oriented'' when referring to (oriented) {\em paths}, (oriented) {\em trees} and (oriented) {\em forests}, these are (directed) graphs whose underlying undirected graphs are path, trees, respectively forests in the traditional sense. In particular, paths, trees and forests have no loops and no pair of vertices is connected in both directions. Similarly, when we call a graph {\em connected}, refer to the {\em connected components} or the {\em girth} of a graph or to a {\em cycle} in a graph we mean the corresponding notion in the underlying undirected graphs.

A {\em homomorphism} $f$ between graphs $G$ and $H$ is a map
$f:V(G)\to V(H)$ satisfying that for every edge $(x,y)\in E(G)$ we have
$(f(x),f(y))\in E(H)$. We write $f:G\to H$ to express that $f$ is a
homomorphism from $G$ to $H$ and we write $G\to H$ to express that such a
homomorphism exists. This is clearly a transitive and reflexive relation. We
write $G\not\to H$ if no homomorphism from $G$ to $H$ exists and call a family
of graphs an {\em antichain} if no homomorphism exists between any two
distinct members.

If both $G\to H$ and $H\to G$ hold for a pair of graphs we say $G$
and $H$ are {\em equivalent}. This is
clearly an equivalence relation. In any equivalence class the graph with the
fewest vertices is unique up to isomorphism. We call such a graph a
{\em core} and also the {\em core of} any graph in its equivalence
class. A graph $G$ is a core if and only if every homomorphism $f:G\to G$ is
an isomorphism.

We
say a graph $G$ is {\em minimal} in a family $\As$ of graphs, if $G\in\As$ and
any graph $H\in\As$ satisfying $H\to G$ is equivalent to $G$. We define
{\em maximal} in a family of graphs similarly but with the homomorphism
condition reversed. Note that there are two-way infinite chains of graphs, so
infinite classes do not always have minimal or maximal elements.

A {\em duality pair} is a pair $(\As,\Ds)$ of families of graphs satisfying
that for every graph $G$ we have either $A\to G$ for some $A\in\As$ or $G\to
D$ for some $D\in\Ds$ but not both. If $(\As,\Ds)$ is a duality pair we call
$\Ds$ a {\em dual} of $\As$. Note, however, that this relation is not
symmetric.

Clearly, each graph in $\As$ and $\Ds$ can be replaced with its core to obtain another duality pair $(\As',\Ds')$ so we can (and often will) assume that both sides of a duality pair consist of cores. Further if $A\to A'$ with $A\ne A'$ and $A,A'\in\As$ we can remove $A'$
from $\As$ without ruining the duality pair property. This way, if $\As$ is {\em finite} we can replace it with the set $\As'$ of its minimal elements and in the resulting duality pair $(\As',\Ds)$ where $\As'$ is an antichain. Similarly, if $\Ds$ is finite we can replace it with the set $\Ds'$ of its maximal elements to obtain a duality pair $(\As,\Ds')$ with $\Ds'$ being an antichain. Note however that such transformation is not possible in general for infinite families.

It is a trivial observation that any family $\As$ has a dual set $\Ds$, simply take $\Ds=\{G\mid\not\hspace{-4pt}\exists A\in\As:A\to G\}$. For any family $\Ds$ of graphs one  can similarly set $\As=\{G\mid\not\hspace{-3.5pt}\exists D\in\Ds:G\to D\}$ making $(\As, \Ds)$ a duality pair. Because of this abundance it is not reasonable to hope for a meaningful characterization of all duality pairs. But characterization of restricted classes of duality pairs have been already done successfully. By "unique" in the following results we mean unique up to equivalence (or up to isomorphism if we restrict attention to cores).

\begin{theorem}[\cite{NT}]
Each tree $t$ has a well-defined, unique graph $D(t)$ making $(\{t\},\{D(t)\})$ a duality pair. In all singleton duality pairs $(\{A\},\{D\})$ the graph $A$ is equivalent to a tree.
\end{theorem}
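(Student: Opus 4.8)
The plan is to treat the two halves of the statement separately: (i) existence and uniqueness of a dual $D(t)$ for a tree $t$, and (ii) the fact that a graph admitting a singleton dual is equivalent to a tree. Uniqueness in (i) is immediate: if $(\{t\},\{D\})$ and $(\{t\},\{D'\})$ are both duality pairs then $G\to D\iff t\not\to G\iff G\to D'$ for every $G$, so (putting $G=D$ and $G=D'$) $D\to D'$ and $D'\to D$, hence $D$ and $D'$ have isomorphic cores. Thus it suffices to produce one dual, and we may assume $t$ is a core tree.

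For existence I would construct $D(t)$ explicitly and verify the two halves of the duality condition. A core tree is built from a single vertex by successively attaching a pendant edge (pointing in or out) at a marked vertex; the directed path on $n+1$ vertices has the transitive tournament on $n$ vertices as its dual (a Gallai--Roy type statement: $G$ maps to the latter iff $G$ has no directed walk of length $n$ iff the former does not map to $G$), and one propagates a dual through each construction step --- using the right adjoint $\delta_R$ of the arc-graph operator $\delta$ (so $\delta G\to H\iff G\to\delta_R H$, and $\delta$ shortens directed paths) for the path parts, and a product to record the marked vertex for the branching. Equivalently, in the automaton language flagged in the abstract: regard a graph $G$ as a nondeterministic tree-automaton accepting exactly the oriented trees that map into it, determinize and complement, and let $D(t)$ be the part reachable while reading $t$; its states are subsets of (or functions on) $V(t)$, so $D(t)$ is finite. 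Either way, $t\not\to D(t)$ is built in; the substantive half --- and the main obstacle of (i) --- is that every $G$ with $t\not\to G$ admits a homomorphism $G\to D(t)$. This is exactly where acyclicity of $t$ is used: because $t$ has no cycle, ``$t\not\to G$'' is certified by locally consistent data on $G$ (the outcome of a greedy attempt to embed $t$, which fails coherently precisely because there is no cycle to force incompatible requirements), and such coherent data is the same thing as a homomorphism $G\to D(t)$. I expect this gluing argument, an induction mirroring the recursive construction, to be the technical heart; it is also what makes $D(t)$ finite.

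For (ii), let $(\{A\},\{D\})$ be a duality pair and pass to cores, so $A$ and $D$ are cores. First, $A$ is connected: if $A=A_1\sqcup A_2$ with both parts nonempty, then neither $A_i$ maps to the other (else $A$ has a non-surjective endomorphism), so $A\not\to A_1$ and $A\not\to A_2$, whence by the duality condition $A_1\to D$ and $A_2\to D$, hence $A\to D$; but $A\to A$ as well, contradicting ``not both'' at $G=A$. So $A$ is connected, and if $A$ is not equivalent to a tree then it contains a cycle, of length $\ell$ say. By Erd\H{o}s's theorem there is a finite digraph $G$ whose underlying graph has girth exceeding $\max(\ell,|V(A)|)$ and chromatic number exceeding that of the underlying graph of $D$; then $G\not\to D$, so $A\to G$ by the duality property. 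The image of $A$ in $G$ spans at most $|V(A)|$ vertices, hence contains no cycle (the girth of $G$ is larger), so $A$ maps to a forest, and therefore to a directed path (every forest maps to a long enough directed path, by propagating a level function from a root of each component). It remains to derive a contradiction from ``$A$ is a core containing a cycle and mapping to a directed path'', and this is the key lemma of (ii): \emph{a core that admits a homomorphism to a forest is itself a forest}. Here the map $A\to$ directed path makes $A$ ``layered'' (its vertices carry levels with every edge going up by one), and the content is that a layered digraph retracts onto a forest; I would prove this by iterated folding --- at a top level of a shortest cycle, identify a suitable pair of vertices, verify the identification is an endomorphism, and repeat until no cycle survives --- and making this precise, i.e.\ ruling out balanced cores with a cycle, is the crux. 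Granting the lemma, $A$ is a forest, hence (being connected) a tree, contradicting the cycle; so $A$ is equivalent to a tree.
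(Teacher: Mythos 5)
First, a point of order: the paper does not prove this statement at all --- it is quoted from \cite{NT} as background --- so there is no in-paper proof to compare yours against line by line. I will instead assess your argument directly and point to the closest material the paper does contain. Your uniqueness argument in part (i) is correct. The existence half of (i), however, is only an outline: you name the two standard routes (propagating duals through a recursive construction of $t$ via right adjoints of the arc-graph functor, or a power-set/automaton construction on $V(t)$), but you explicitly defer ``the technical heart,'' namely that $t\not\to G$ forces $G\to D(t)$. As written, that half is a plan rather than a proof.

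The genuine gap is in part (ii). Your high-girth, high-chromatic $G$ does give $G\not\to D$, hence $A\to G$, and the image of $A$ in $G$ is indeed acyclic, so $A$ maps to an oriented forest and hence to a directed path. But the lemma you then rely on --- \emph{a core admitting a homomorphism to a forest is itself a forest} --- is false. It is equivalent to the assertion that every connected balanced digraph is homomorphically equivalent to a tree, and balanced oriented cycles that are cores are counterexamples: glue two homomorphically incomparable core paths of the same height at both of their end vertices; the resulting balanced cycle maps onto a directed path via its level function yet retracts onto none of its subpaths. (Your first attempt at ``iterated folding'' would succeed exactly when the two arcs of the cycle are comparable, which is why it cannot work in general.) The standard repair, and the one used in \cite{NT}, is the full Directed Sparse Incomparability Lemma alluded to after Theorem~\ref{th:nocycle-antichain}: choose $G$ of girth greater than $|V(A)|$ with $G\to A$ \emph{and} $G\to H\iff A\to H$ for every $H$ on at most $|V(D)|$ vertices. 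Then $G\not\to D$, so $A\to G$ by duality, so $A\to G\to A$; since $A$ is a core it is a retract, hence a subgraph, of $G$, hence has girth exceeding its own order, hence is a forest, and connectedness (your argument, or Lemma~\ref{lem:connected}) makes it a tree. The crucial extra ingredient you are missing is $G\to A$, which upgrades ``$A$ maps to something sparse'' to ``$A$ embeds in something sparse.'' Note also that the paper's own Theorem~\ref{th:nocycle-antichain}, specialized to $\As=\{A\}$, yields the second sentence immediately (a non-forest core $A$ would require some $B\in\{A\}$ with $A\not\to B$, which is absurd) by a self-contained tournament construction avoiding sparse incomparability altogether; that is the argument most worth comparing yours to.
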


\begin{theorem}[\cite{FNT}]\label{th:finiteforest}
All finite families $\As$ of forests have a unique antichain dual $\Ds(\As)$ and it is finite. For any duality pair $(\As,\Ds)$ with both $\As$ and $\Ds$ finite antichains all graphs $A\in\As$ are equivalent to forests.
\end{theorem}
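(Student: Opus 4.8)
The two claims call for rather different techniques, so I would treat them separately.

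For the first claim the plan is to build the dual explicitly out of tree duality. Replace each forest in $\As=\{F_1,\dots,F_n\}$ by its core (still a forest) and write $F_i=t_{i,1}\sqcup\cdots\sqcup t_{i,m_i}$ as a disjoint union of trees. Since a homomorphism out of a disjoint union is precisely a family of homomorphisms out of its components, $F_i\to G$ iff $t_{i,j}\to G$ for every $j$, so by the tree duality theorem of \cite{NT}, $F_i\not\to G\iff\exists j\colon G\to D(t_{i,j})$. Hence ``no $F_i$ maps to $G$'' is equivalent to ``$\forall i\,\exists j\colon G\to D(t_{i,j})$'', which by the finite distributive law becomes ``there is a choice function $\phi$ with $G\to\prod_i D(t_{i,\phi(i)})$'', using that the categorical product of digraphs represents simultaneous homomorphisms. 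Thus $(\As,\Ds_0)$ is a duality pair for the finite set $\Ds_0=\bigl\{\prod_i D(t_{i,\phi(i)})\colon\phi\bigr\}$; the ``not both'' half holds because $t_{i,\phi(i)}\to D(t_{i,\phi(i)})$ is impossible. Now pass from $\Ds_0$ to its set of maximal elements (a nonempty finite poset has maximal elements) and take cores: this does not change the down-set and produces a finite antichain dual $\Ds(\As)$. Uniqueness is the usual sandwich argument: if $\Ds_1,\Ds_2$ are antichain duals of cores, then each $D\in\Ds_1$ receives no homomorphism from $\As$, hence $D\to D'\to D''$ with $D'\in\Ds_2$ and $D''\in\Ds_1$; the antichain and core properties force $D''=D\cong D'$, so $\Ds_1\subseteq\Ds_2$ up to isomorphism, and symmetrically.

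For the second claim, let $(\As,\Ds)$ be a duality pair with $\As$ and $\Ds$ finite antichains (which we may take to consist of cores, since replacing a graph by its core preserves both being an antichain and being a dual), and fix $A_0\in\As$; the goal is to show $A_0$ is a forest. Put $\ell=\max\bigl(\{|V(A)|:A\in\As\}\cup\{|V(D)|:D\in\Ds\}\bigr)$. The plan is to apply the Sparse Incomparability Lemma of Ne\v{s}et\v{r}il and R\"{o}dl to $A_0$ to obtain a digraph $B$ with girth exceeding $\ell$, a homomorphism $B\to A_0$, and the property that $B\to C\iff A_0\to C$ for every $C$ on at most $\ell$ vertices. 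Since $A_0\to A_0$ puts $A_0$ on the $\As$-side of the duality, $A_0\not\to D$ for every $D\in\Ds$, and then the last property gives $B\not\to D$ for all $D\in\Ds$; by duality some $A\in\As$ has $A\to B$. As $\mathrm{girth}(B)>\ell\ge|V(A)|$, the image of each connected component of $A$ is a connected subgraph of $B$ on fewer vertices than the girth, hence a forest, so $A$ admits a homomorphism into a forest. A core that maps into a forest is itself a forest, so $A$ is a forest; then $A\to B\to A_0$ yields $A\to A_0$, and since $\As$ is an antichain this forces $A=A_0$, so $A_0$ is a forest.

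The step I expect to be the real obstacle is the assertion that \emph{a core admitting a homomorphism into a forest is a forest}. The easy reductions dispose of loops, digons, and any cycle of nonzero net orientation, since these survive in the forest image; what remains is to rule out a ``balanced'' core (all closed walks of zero net orientation) from having a cycle. Such a core carries a level function and maps to a directed path, and I would argue it must in fact be a single directed path: along any cycle the level function attains a local maximum, giving two distinct vertices at the same level with a common out-neighbour, and one shows a connected balanced core admits a cascade of such folds collapsing it to a path. Turning this ``cascading fold'' idea into a genuine non-injective endomorphism of the core (rather than merely a homomorphism onto a smaller digraph) is where the care is needed; alternatively one quotes this structural fact about balanced cores from the literature. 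The only other point requiring attention is that the Sparse Incomparability Lemma is invoked here for directed graphs, which is covered by its general relational form.
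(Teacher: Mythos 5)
The paper does not actually prove this statement; it quotes it from \cite{FNT}, so there is no in-paper argument to measure you against, and I am assessing your proposal on its own. Your first half is correct and is essentially the standard construction: decompose each core forest into its tree components, invoke the singleton tree dualities $D(t_{i,j})$ of \cite{NT}, convert ``for all $i$ there exists $j$'' into a choice function and represent the resulting conjunction of conditions by a categorical product, then prune to maximal elements and run the sandwich argument for uniqueness. All of these steps are sound.

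The second half contains a genuine gap, and it is exactly the step you flagged: the lemma ``a core admitting a homomorphism into a forest is a forest'' is false, and so is your proposed route to it (that a connected balanced core folds down to a directed path). Take vertices $i,h,a,b,c,d,e,f$ with edges $i\to h$, $h\to c$, $a\to c$, $a\to b$, $b\to d$, $c\to d$, $b\to e$, $e\to f$. This digraph is balanced, with level function $\lambda(i)=-1$, $\lambda(h)=\lambda(a)=0$, $\lambda(c)=\lambda(b)=1$, $\lambda(d)=\lambda(e)=2$, $\lambda(f)=3$, so $\lambda$ maps it onto the directed path $p(++++)$, a tree; yet it contains the four-cycle on $a,c,d,b$ and it is a core: any endomorphism shifts $\lambda$ by a constant, the presence of both levels $-1$ and $3$ forces that constant to be $0$, and then the unique vertices at levels $-1$ and $3$ pin down every vertex by propagation along $i\to h\to c$ and $f\leftarrow e\leftarrow b\leftarrow a$ and the remaining edges. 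So for cores, ``maps into a forest'' does not imply ``is a forest'', and the ``cascade of folds'' you hoped for does not exist: the two level-$1$ vertices $b,c$ share the out-neighbour $d$ but cannot be identified.

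Fortunately your own setup already contains the repair, just in the wrong order. From $A\to B\to A_0$ and the antichain property you get $A=A_0$ \emph{before} saying anything about forests; then $A_0\to B\to A_0$ composes to an automorphism of the core $A_0$, so the homomorphism $A_0\to B$ is injective, and any cycle of the underlying undirected graph of $A_0$ then embeds as a cycle of length at most $\ell$ in $B$, contradicting $\mathrm{girth}(B)>\ell$. The forest conclusion must come from this injectivity into a high-girth graph, not from the image of $A$ in $B$ being a forest. (Compare the paper's Theorem~\ref{th:nocycle-antichain}, which proves the analogous statement for infinite antichains by a self-contained tournament construction precisely to avoid invoking the Sparse Incomparability Lemma; there too the core property is used to force an embedding, not merely a homomorphism.)
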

We saw above that having antichains as the members of duality pairs can be considered as a relaxation of the finiteness condition. Having characterized the duality pairs with both sides finite it is natural to consider this relaxation. We start with quoting a result showing that there are probably too many infinite-infinite antichain duality pairs for a meaningful
characterization.

\begin{theorem}[\cite{DENS}]
Each finite antichain $\As$ of graphs, that is not maximal, can be extended
\begin{enumerate}[{\rm (i)}]
\item to a duality pair $(\mathcal B,\mathcal C)$ such   that $\mathcal A \subset \mathcal B$ and both $\mathcal B$ and $\mathcal C$ are infinite antichains;
\item to a maximal infinite antichain, which is not a union of the sides of   any duality pair.
\end{enumerate}
\end{theorem}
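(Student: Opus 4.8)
\emph{Setup and tools.} Write $\As=\{A_1,\dots,A_k\}$; by non-maximality fix a core $F$ incomparable (in the homomorphism order) to every $A_i$. There are only countably many finite digraphs, so each extension below can be produced by a plain recursion along a fixed enumeration of all cores. Two elementary facts are used throughout. First, the behaviour of $\times$ and $\sqcup$: $G\times H\to G$, while $G\to G\times H$ iff $G\to H$, and $X\sqcup Y\to Z$ iff $X\to Z$ and $Y\to Z$; consequently, if $G\not\to X$ and $X\not\to G$ then $G\times H$ is again incomparable to $X$ (a map $G\times H\to X$ forces $G\to X$, a map $X\to G\times H$ forces $X\to G$). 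Second, for every non-forest core $J$ and every $r$ there is a finite digraph $W$ with $\chi(W)>r$, with $W$ acyclic (no directed cycle) and with no loops and no two-cycles, and with $J\not\to W$; we invoke such ``sparse but chromatically rich'' gadgets only in that form.

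\emph{Proof of (i).} The plan is to build, by a single interleaved recursion, an infinite antichain $\Bs=\As\cup\{B_1,B_2,\dots\}$ and an infinite antichain $\mathcal C=\{C_1,C_2,\dots\}$ that are \emph{totally incomparable} (no homomorphism in either direction between a member of $\Bs$ and a member of $\mathcal C$) and with the property that every core lies in the up-set $\{G:\exists B\in\Bs,\ B\to G\}$ or in the down-set $\{G:\exists C\in\mathcal C,\ G\to C\}$; this is exactly the assertion that $(\Bs,\mathcal C)$ is a duality pair. The seed is $F$: together with the pairwise incomparable graphs $F\times X_n$, for $\{X_n\}$ a fixed infinite antichain with $F\not\to X_n$, it populates the complement of the up-set of $\As$ and keeps the right side infinite. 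At stage $n$, with the enumerated core $G_n$: if $G_n$ is already above some $B\in\Bs$ or below some $C\in\mathcal C$, do nothing; otherwise exactly one of two situations occurs. If no current $C$ maps into $G_n$, adjoin to $\mathcal C$ the core of $G_n\sqcup W$ with $\chi(W)$ exceeding the chromatic number of every graph built so far; by the first tool this lands $G_n$ in the down-set while staying incomparable to all current members of $\Bs$ and $\mathcal C$ (in particular to every $A_i$), and total incomparability survives because $\chi(W)$ is large. If some current $C$ does map into $G_n$, then $G_n$ is forced into the up-set, and one must adjoin to $\Bs$ a new graph strictly below $G_n$ that remains incomparable to everything built so far, including the $C$'s lying below $G_n$. \emph{This last step is the crux and the hard part of the argument}: an arbitrary infinite antichain admits no antichain dual in general, so the new $\Bs$-elements must be chosen --- again with high-chromatic, cycle-restricted gadgets --- so precisely that no core is ever stranded strictly between a $\mathcal C$-element and a $\Bs$-element; such a configuration $C<G<B$ contradicts the partition property (it forces $C\to C'$ with $C'\in\mathcal C$, or $B'\to B$ with $B'\in\Bs$), and it is exactly the maintenance of total incomparability that rules it out. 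Once the recursion is pushed through, the two limits are the required infinite antichains with $\As\subset\Bs$.

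\emph{Proof of (ii).} First an obstruction I would isolate: if a maximal antichain $\mathcal M$ equals $\Bs\cup\mathcal C$ for a duality pair $(\Bs,\mathcal C)$, then no $m\in\mathcal M$ can have \emph{both} a strict lower bound $\ell$ and a strict upper bound $u$ that are incomparable to every element of $\mathcal M\setminus\{m\}$; indeed, if $m\in\Bs$ then $\ell$ receives no homomorphism from $\Bs$, hence $\ell\to C$ for some $C\in\mathcal C\subseteq\mathcal M\setminus\{m\}$, contradicting incomparability, and $m\in\mathcal C$ is ruled out symmetrically by $u$. So it suffices to extend $\As$ to a maximal antichain carrying such a triple. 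Build $m,\ell,u$ from $F$ with $\ell\to m\to u$ strict and all three incomparable to every $A_i$ (this is possible by the density of the homomorphism order together with the standard ability to manufacture graphs incomparable to a prescribed finite set); when $\As$ contains no oriented path one may moreover give $\ell$ a directed cycle, which then places every forest strictly below $\ell$, hence below $m$, so that forests never become problematic in the recursion below; if $\As$ does contain an oriented path then every forest is already comparable to that path. Now run the recursion over all cores, starting from $\As\cup\{m\}$ and maintaining the invariant that every member of the current antichain other than $m$ is incomparable to both $\ell$ and $u$: a core comparable to a current member is skipped; a core $G$ incomparable to the current antichain but comparable to $\ell$ or to $u$ is, being incomparable to $m$, a non-forest satisfying $\ell\to G$ or $G\to u$ strictly, and instead of $G$ one adjoins the core of $G\times W$ for a gadget $W$ with $G\not\to W$, with $\ell\not\to W$ (automatic once $W$ is acyclic, as $\ell$ has a directed cycle) and with $\chi(W)>\chi(u)$ (so $W\not\to u$); by the first tool $G\times W$ sits strictly below $G$, hence $G$ becomes comparable to the antichain, while $G\times W$ is automatically incomparable to $\ell$, to $u$ and to every current member; a core incomparable to the current antichain and to both $\ell$ and $u$ is simply adjoined. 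The limit $\mathcal M$ is a maximal infinite antichain realising the triple $m,\ell,u$, hence by the obstruction not a union of any duality pair. \emph{The main obstacle here is the gadget existence together with the treatment of forests}: one must know that for every relevant (non-forest, core) $G$ there really is such a $W$ --- the point being that folding the short cycles of a forest-inequivalent core into a loopless, two-cycle-free digraph of large girth is impossible --- and one must be sure forests genuinely stay out of the problematic case, which in full generality uses the finer structure of the homomorphism order on forests (and the tree-duality theorem cited above). Granting these, the rest is bookkeeping.
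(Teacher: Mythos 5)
This theorem is quoted from \cite{DENS}; the paper you are working from gives no proof of it, so there is nothing internal to compare your argument against --- it has to stand on its own. As written, it does not: it is a strategy outline that defers precisely the steps you yourself flag as ``the crux.'' In part (i) the whole difficulty is the sub-case where some current $C\in\mathcal C$ maps into the enumerated core $G_n$, forcing you to insert a new $\Bs$-element below $G_n$ that is incomparable to every current member of $\Bs\cup\mathcal C\cup\As$ \emph{including the $C$'s sitting below $G_n$}; you state that this ``must be chosen \ldots with high-chromatic, cycle-restricted gadgets'' but never exhibit the choice or verify it. Note that the natural candidate $G_n\times W$ fails immediately unless you can guarantee $C\not\to W$ for the very $C$ with $C\to G_n$, and those $C$'s were themselves built from acyclic gadgets, so acyclicity of $W$ does not save you. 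Likewise your claims of the form ``$G\times W$ is automatically incomparable to every current member'' are exactly what the Directed Sparse Incomparability Lemma is for; without invoking it with explicit parameters (girth and size bounds relative to the finite current stage) the word ``automatically'' is unearned, since a product can collapse onto a small target even when neither factor maps there.

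In part (ii) the obstruction you isolate (no element of a splitting maximal antichain can have both a strict lower and a strict upper bound incomparable to the rest of the antichain) is correct and is a genuinely useful reduction. But the construction realising it has a concrete error: the assertion that ``if $\As$ does contain an oriented path then every forest is already comparable to that path'' is false --- Section~\ref{sec:path} of this very paper exhibits infinite antichains of oriented paths, so a single path in $\As$ certainly does not dominate or absorb all forests. Moreover, if $\ell$ is given a directed cycle then \emph{every} forest maps to $\ell$ and hence to $m$, so $\As\cup\{m\}$ is not even an antichain whenever $\As$ contains any forest, not just a path. Since forests are exactly the graphs for which your gadget $W$ with $G\not\to W$ cannot exist, the forest case is not a side remark to be ``granted'': it is a hole in the argument. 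The overall architecture (countable recursion over cores, maintaining total incomparability between the two sides, sparse--incomparability gadgets) is the right kind of machinery for a \cite{DENS}-style result, but the proof is not done until the insertion steps are actually performed and the forest case is handled correctly.
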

\noindent This leaves open the question of finding or characterizing duality pairs with
one side finite while the other an infinite antichain. Erd\H{o}s and Soukup
\cite{ES} proved that no such duality pair exists with the left side finite.

\begin{theorem}[\cite{ES}]
There exists no duality pair $(\As,\Ds)$ with $\As$ finite and $\Ds$ an infinite antichain.
\end{theorem}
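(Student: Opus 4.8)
The plan is to assume that $(\As,\Ds)$ is such a duality pair and to reach a contradiction. Write $\mathcal F$ for the class of directed graphs $G$ such that $A\not\to G$ for every $A\in\As$; then $(\As,\Ds)$ being a duality pair is equivalent to saying that $\Ds\subseteq\mathcal F$ and that every $G\in\mathcal F$ admits a homomorphism into some member of $\Ds$. The crucial first observation concerns disjoint unions: if $D_1,D_2\in\Ds$ are distinct, then $D_1\sqcup D_2\notin\mathcal F$. Indeed, otherwise $D_1\sqcup D_2\to D$ for some $D\in\Ds$, and restricting this homomorphism to the two summands would give $D_1\to D$ and $D_2\to D$, which is impossible between distinct members of an antichain. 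Consequently, for each pair of distinct $D_1,D_2\in\Ds$ there is some $A\in\As$ with $A\to D_1\sqcup D_2$. Such an $A$ is necessarily disconnected: a connected $A$ would be mapped entirely into $D_1$ or entirely into $D_2$, whereas $A\not\to D_i$ since $D_i\to D_i\in\Ds$ rules out any $A'\in\As$ with $A'\to D_i$; for the same reason, in any homomorphism $A\to D_1\sqcup D_2$ at least one connected component of $A$ is sent into $D_1$ and at least one into $D_2$. (Were $\As$ to consist only of connected graphs this would already force $|\Ds|\le 1$; the real work lies in handling disconnected members of $\As$.)

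The main step is then a Ramsey argument. Fix an infinite sequence $D_1,D_2,D_3,\dots$ of pairwise distinct elements of $\Ds$. For $i<j$, choose some $A^{ij}\in\As$ and a homomorphism $h^{ij}\colon A^{ij}\to D_i\sqcup D_j$, and colour the pair $\{D_i,D_j\}$ by the triple $(A^{ij},P^{ij},Q^{ij})$, where $P^{ij}$ is the set of connected components of $A^{ij}$ that $h^{ij}$ maps into the \emph{smaller-indexed} graph $D_i$, and $Q^{ij}$ the set of those mapped into $D_j$. There are only finitely many colours, because $\As$ is finite and each of its members has only finitely many components; so the infinite Ramsey theorem for pairs yields an infinite subsequence all of whose pairs get one and the same colour $(A,P,Q)$, with $P$ and $Q$ non-empty (by the previous paragraph) and partitioning the component set of $A$. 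Let $A_P$ and $A_Q$ be the subgraphs of $A$ induced on the components in $P$ and in $Q$. Then for any two members $D\prec D'$ of this subsequence we obtain homomorphisms $A_P\to D$ and $A_Q\to D'$.

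To conclude, pick three members $D\prec D'\prec D''$ of the monochromatic subsequence. Applying the last sentence to the pair $D\prec D'$ gives $A_Q\to D'$, and applying it to $D'\prec D''$ gives $A_P\to D'$; since $A_P$ and $A_Q$ have disjoint vertex sets and every edge of $A$ lies within $A_P$ or within $A_Q$, these two homomorphisms glue into a single homomorphism $A\to D'$. This contradicts $A\not\to D'$, which holds because $D'\in\Ds$. Hence there is no duality pair with $\As$ finite and $\Ds$ an infinite antichain. The point I expect to be the main obstacle is arranging the colouring so that it remembers the \emph{order} of the two targets: it is essential that ``the components in $P$ go to the smaller index'', since then a graph $D'$ occurring as the larger element of one pair and as the smaller element of another is forced to absorb both $A_P$ and $A_Q$, hence all of $A$. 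The disjoint-union observation and the final gluing are routine once this asymmetry is built in.
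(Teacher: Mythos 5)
The paper does not actually prove this theorem --- it only quotes it from \cite{ES} --- so there is no internal proof to compare against; judged on its own, your argument is correct and self-contained. Both pillars check out. First, for distinct $D_1,D_2\in\Ds$ the disjoint union cannot map into any member of the antichain $\Ds$, so some $A\in\As$ maps into $D_1\cup D_2$, and since the ``not both'' clause of the duality definition forbids $A\to D_i$, at least one component of $A$ lands in each summand. Second, colouring the pairs $i<j$ by the finitely many triples $(A^{ij},P^{ij},Q^{ij})$ (finitely many because $\As$ is finite and each member has finitely many components) and applying Ramsey's theorem for pairs yields, from any three indices $i<j<k$ in a monochromatic set, homomorphisms $A_Q\to D_j$ (from the pair $i<j$) and $A_P\to D_j$ (from the pair $j<k$); these glue along the component partition into $A\to D_j$, contradicting duality. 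The asymmetric colouring that records which block of components goes to the \emph{smaller}-indexed target is exactly the right device, and note that only a monochromatic triple is needed, so finite Ramsey would do. Your argument is in the same spirit as the component-counting used in Lemma~\ref{lem:connected} of the paper, and it is more elementary than routes through the Directed Sparse Incomparability Lemma that the paper alludes to for related results.
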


The existence of
infinite-finite antichain duality pairs has been a long standing open problem. In this paper we give several examples of such duality pairs and also study what families can appear in the left side of such a duality pair. The final answer (a characterization of such families) will follow from the upcoming paper \cite{general} that studies the problem in the more general
context of relational structures.

In Section~\ref{sec:why} we limit the complexity of any graph appearing in an antichain with a finite dual: it must be equivalent to a forest. For finite antichains this is implied by Theorem~\ref{th:finiteforest}. We also show that such a family has to have bounded maximal degree and bounded number of components.

When our forests in a duality pair have only one component and maximum degree two we deal with families of paths. In Section~\ref{sec:path} we exhibit specific infinite antichains of paths, some with and some without a finite dual.

In Section~\ref{sec:tree} we give a simple transformation turning the duality pairs in the previous section into ones with non-path trees on the left side but these trees are still close to paths. We also give an examples of infinite-singleton duality pair where the left side consists of more complex trees constructed from arbitrary binary trees. One of these examples has an antichain on the left side. In another example of an infinite-finite antichain duality pair the left side consists of forests with several components.

\section{Why forests?}\label{sec:why}
\noindent
In this section we prove that all graphs in the left side of an infinite-finite antichain duality pair must be equivalent to forests. This is an extension of the corresponding result for finite antichains in Theorem~\ref{th:finiteforest}.

\begin{theorem}\label{th:nocycle-antichain}
Let $(\As,\Ds)$ be a duality pair, where $\Ds$ is finite and $\As$ consists of cores. Then for each graph $A\in\As$ that is not a forest there exists another graph $B\in\As$ with $B\to A$ but $A\not\to B$.
\end{theorem}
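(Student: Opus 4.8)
The plan is to attach to the given core $A\in\As$ a ``high-girth copy'' $C$ and then read off $B$ from the duality property applied to $C$. Since $A$ is a core that is not a forest, its underlying multigraph contains a cycle; let $g<\infty$ be its girth, counting a loop as a cycle of length $1$ and a pair of opposite edges as a cycle of length $2$. We may assume $A$ (and hence, for the purpose of the argument, everything) is loopless: a looped core is the one-vertex loop, and that case follows by the same scheme using any loopless graph with no homomorphism into $\Ds$, e.g.\ a large complete digraph, which exists because $\Ds$ is finite.

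The engine of the proof is the Sparse Incomparability Lemma of Ne\v{s}et\v{r}il and R\"odl, used in its form for directed graphs (equivalently, for relational structures): given the finite family $\Ds$ and an integer $\ell$, there is a directed graph $C$ with $C\to A$, with girth at least $\ell$, and with the property that $C\to D$ implies $A\to D$ for every $D\in\Ds$. I would apply this with $\ell=g+1$, so that $C$ has no loops, no digons, and no cycle of length $\le g$.

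Now I would invoke the duality property twice. First apply it to $A$ itself: since $A\in\As$ and $A\to A$, the ``not both'' clause forbids $A\to D$ for all $D\in\Ds$. By the choice of $C$ this gives $C\not\to D$ for all $D\in\Ds$, so applying the duality property to $C$ produces some $B\in\As$ with $B\to C$; composing with $C\to A$ yields $B\to A$. It remains to show $A\not\to B$. Suppose $A\to B$; then $A\to B\to C$, so fix a homomorphism $h\colon A\to C$. Composing $h$ with the homomorphism $C\to A$ is an endomorphism of $A$, hence an automorphism because $A$ is a core, so $h$ is injective; but an injective homomorphism realizes $A$ as a subgraph of $C$, giving $C$ a cycle of length $g$ and contradicting $\mathrm{girth}(C)\ge g+1$. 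Thus $A\not\to B$, and since $A\to A$ we get $B\neq A$, so $B$ is the desired further element of $\As$.

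The substantive ingredient is the Sparse Incomparability Lemma, and the one point demanding care is that ``girth'' must be read in the directed sense (no loops, no digons, no short cycles), so it is the relational-structure version that is needed, not merely the classical undirected one; once $C$ is in hand the rest is a short chase, the one genuinely new observation being that a core with a short cycle cannot be mapped into a graph of strictly larger girth that maps back to it. If one wishes to avoid quoting the lemma, $C$ can instead be constructed directly as a suitable high-girth cover of $A$ tailored to destroy the finitely many homomorphisms into $\Ds$, but this amounts to reproving the lemma and is more laborious.
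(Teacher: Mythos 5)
Your proof is correct, but it takes a different route from the one the paper actually writes out --- namely the route the authors explicitly mention and then decline: ``This result can be proved from the Directed Sparse Incomparability Lemma\dots{} We present a self contained proof instead.'' You black-box the Ne\v{s}et\v{r}il--R\"odl lemma to produce a high-girth $C$ with $C\to A$ and the same homomorphisms into the finite set $\Ds$, then get $B\to C\to A$ from duality and rule out $A\to B$ by the clean observation that a core containing a $g$-cycle cannot map into a graph of girth greater than $g$ that maps back onto it (the core property forces the map $A\to C$ to be injective, planting a short cycle in $C$). Your handling of the degenerate loop case via a large loopless complete digraph is also fine. The paper instead builds a bespoke, low-tech substitute for the lemma: it breaks a single edge $(x,y)$ of one cycle of $A$ to form $A'$, glues many copies of $A'$ along a large tournament to form $Y$ with $Y\to A$, kills all homomorphisms $Y\to D$ by pigeonhole on the tournament, and proves $A\not\to Y$ by a direct local analysis of where the broken cycle could land --- note that $Y$ need not have large girth at all, so the mechanism excluding $A\to Y$ is genuinely different from yours. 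The trade-off is the usual one: your argument is shorter and more conceptual but rests on a substantial external theorem (and, as you rightly flag, on the relational-structure form of girth that excludes digons, which is why the one-sentence citation hides real content), while the paper's is longer but entirely self-contained and only needs to destroy one cycle rather than all short ones. Both are valid proofs of the theorem.
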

\noindent This result can be proved from the Directed Sparse Incomparability Lemma, see
\cite{DENS,ES}. We present a self contained proof instead.

\begin{proof}
Let $A\in\As$ be a graph that is no forest. Let $(x,y)$ be an edge of $A$ contained in a cycle $C$. Let $A'$ be the graph obtained from $A$ by removing this edge, adding a new vertex $x'$ and the edge $(x',y)$. Notice that the map moving $x'$ to $x$ and fixing all other vertices is an $A'\to A$ homomorphism.

Let  $X$ be an arbitrary tournament with more vertices than any of the (finitely many) graphs in $\Ds$. Let us consider the vertex set $V(X)$ (no edges yet) and disjoint copies  $A'_{uv}$ of $A'$ for every edge $(u,v)\in E(X)$. We obtain the graph $Y$ by identifying the copy of $x$ in $A'_{uv}$ by $u$ and the copy of $x'$ in $A'_{uv}$ by $v$ for all $(u,v)\in E(X)$.

Note that the natural $A'\to A$ homomorphism can be applied on each copy $A'_{uv}$ of $A'$ as all the identified vertices are mapped to $x$. This gives us a natural homomorphism $g:Y\to A$.

As $(\As,\Ds)$ is a duality pair we either have a graph $B\in\As$ with $B\to Y$ or a graph $D\in\Ds$ with $Y\to D$. In the latter case we have $|V(X)|>|V(D)|$, so by the pigeonhole principle we must have $f(u)=f(v)$ for an edge $(u,v)\in E(X)$. But this means that $f$ restricted to $A'_{uv}$ is an $A\to D$ homomorphism, a clear contradiction. This leaves the former possibility only. We show that $B\in\As$ with $B\to Y$ satisfies the statement of the theorem.

Indeed we have $B\to Y\to A$. We will show $A\not\to Y$ and this implies $A\not\to B$. In the degenerate case when $A$ consist of a single loop edge $A\not\to Y$ holds, since $Y$ is a tournament in this case. So we may assume $A$ is not a loop and as it is a core it does not even contain a loop. In particular $x\ne y$ and $C$ has length at least 2. Assume for a contradiction that a homomorphism $f:A\to Y$ exists. As $A$ is a core the homomorphism $f\circ g:A\to A$ must be an automorphism. Modifying $f$ appropriately, one can assume without loss of generality that $f\circ g$ is the identity, so $f(z)\in g^{-1}(z)$ for each vertex $z\in V(A)$. We must have $f(x)\in g^{-1}(x)=V(X)$ and $f(z)\notin V(X)$ for any other vertex $z$ of $A$. The vertices of the cycle $C$ except $x$ itself must be mapped in a single connected component of $Y\setminus V(X)$, in particular, in a single copy $A'_{uv}$ of $A'$. The image $f(y)$ of $y$ must be the copy of $y$ in $A'_{uv}$, so to have $(f(x),f(y))\in E(Y)$ we must have $f(x)=v$. This forces the image of the other edge incident to $x$ in the cycle $C$ outside $E(Y)$. The contradiction finishes the proof of the theorem.
\end{proof}

Note that this theorem implies that if we have antichains in an infinite-finite duality pair of cores, then the left side contains forests only. Furthermore, something can be said without restricting attention to antichains. Let $(\As,\Ds)$ be an infinite-finite duality pair of cores. We can remove from $\As$ all the graphs which are not forests but are ``dominated'' by one: the graphs $A\in\As$ for which a forest $B\in\As$ exists with $B\to A$, but $A$ itself is not an
forest. Clearly, the set of graphs to where a homomorphism exists from a member of the remaining family $\As'$ did not change, so $(\As',\Ds)$ is still a duality pair. This duality pair may still contain a graph $A\in\As'$ that is not a forest, but such a graph must have {\em infinitely many} distinct graphs $B\in\As'$ with $B\to A$. Indeed, if $A$ has only finitely many such dominating $B$, then any minimal graph in this finite set would violate the preceding theorem. From the Directed Sparse Incomparability Lemma one can also show that if a graph $A\in\As'$ is {\em imbalanced} (containing a cycle with an unequal number of forward and reverse oriented edges), then the underlying undirected graphs of the graphs $B\in\As'$ have unbounded girth.

In the following lemma we state the connection between having connected graphs on the left side of a duality pair and having a single graph on the right side. Recall that we call a graph {\em connected} if the underlying undirected graph is connected and use the term {\em connected component} in a similar way.

\begin{lemma}\label{lem:connected}
Let $(\As,\Ds)$ be an antichain duality pair with $\As$ consisting of cores. If a graph $A\in \As$ has $k$ connected components we have $|\Ds|\ge k$. But if all graphs in $\As$ are connected, then $|\Ds|=1$.
\end{lemma}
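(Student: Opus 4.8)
The plan is to handle the two assertions separately. For the second one -- if every $A \in \As$ is connected then $|\Ds| = 1$ -- I would first observe that $|\Ds| \ge 1$ is immediate (if $\Ds = \emptyset$ then every graph maps to some member of $\As$, but e.g.\ a single vertex with no edge receives no homomorphism from any graph containing an edge; more to the point, duality fails for the one-vertex graph unless we examine cases, so $\Ds$ cannot be empty once $\As$ is nonempty and consists of cores other than a single vertex). The real content is $|\Ds| \le 1$. Suppose $D_1, D_2 \in \Ds$ are distinct; since $\Ds$ may be taken to consist of cores and is an antichain we have $D_1 \not\to D_2$ and $D_2 \not\to D_1$. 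Consider the disjoint union $D_1 \sqcup D_2$. I would argue that $D_1 \sqcup D_2 \not\to D$ for any $D \in \Ds$: a homomorphism $D_1 \sqcup D_2 \to D$ restricts to homomorphisms $D_1 \to D$ and $D_2 \to D$, and since $\Ds$ is an antichain of cores this forces $D = D_1$ and $D = D_2$, a contradiction. By the duality property there must then exist $A \in \As$ with $A \to D_1 \sqcup D_2$. But $A$ is connected, so its image lies entirely in one component, giving $A \to D_1$ or $A \to D_2$ -- contradicting the ``not both'' clause of the duality pair (since then $A \to D_i$ yet also $D_i$ maps to itself, meaning $D_i$ is an object that receives a map from $\As$; more directly, $A \to D_i \in \Ds$ means $D_i$ is on the ``$\Ds$ side'' for $A$, yet $D_i$ is itself a graph to which $\As$ maps, violating that $D_i$ should have $D_i \to D$ for some $D$, not $A \to D_i$). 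The cleanest contradiction: $A \to D_i$ means $D_i$ is a graph admitting a homomorphism from a member of $\As$, but $D_i \in \Ds$ must satisfy $D_i \to D$ for some $D \in \Ds$, and both cannot hold for $D_i$ by the exclusivity in the definition of a duality pair. Hence $\Ds$ is a singleton.

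For the first assertion -- if some $A \in \As$ has $k$ connected components then $|\Ds| \ge k$ -- I would write $A = A_1 \sqcup \dots \sqcup A_k$ with each $A_i$ connected. Since $A$ is a core, each $A_i$ is a core and moreover the $A_i$ are pairwise incomparable (if $A_i \to A_j$ for $i \ne j$ one could delete $A_i$'s component and retract, contradicting that $A$ is a core). For each $i$, consider the graph $A_i$ itself together with the duality property applied to carefully chosen test graphs. The key idea: for each proper subset $S \subsetneq \{1,\dots,k\}$, the graph $A_S := \bigsqcup_{i \in S} A_i$ does not receive a homomorphism from any $B \in \As$ -- because if $B \to A_S$ then $B \to A$, and since $\As$ is an antichain of cores this forces $B = A$, but $A$ has a component $A_j$ with $j \notin S$ that cannot map into $A_S$ (as $A_j$ is connected and incomparable to each $A_i$, $i \in S$). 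Therefore, by duality, $A_S \to D_S$ for some $D_S \in \Ds$. I would then show that the map $S \mapsto D_S$, suitably set up, produces at least $k$ distinct elements of $\Ds$: taking the $k$ sets $S_i = \{1,\dots,k\}\setminus\{i\}$, I claim the $D_{S_i}$ are pairwise distinct. Indeed $A_{S_i} \to D_{S_i}$, and if $D_{S_i} = D_{S_j}$ for $i \ne j$ then every component of $A$ except possibly none maps into this common dual $D$, so $A \to D$, contradicting that $A \in \As$ (the ``not both'' clause again). So all $k$ of the $D_{S_i}$ are distinct, giving $|\Ds| \ge k$.

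The main obstacle I anticipate is getting the incomparability of the components $A_i$ and the exclusivity bookkeeping exactly right: one must be careful that ``$A$ is a core'' genuinely forces the components to be pairwise incomparable cores (a homomorphism $A_i \to A_j$ with $i\neq j$ yields a non-injective endomorphism of $A$, contradiction), and that the various appeals to the ``but not both'' clause of the duality definition are applied to the correct graph. A secondary subtlety is that in the first part, a single connected $A_i$ might still admit $A_i \to D_{S_i}$ only through landing in a component of $D_{S_i}$ that happens also to receive $A_j$; ruling this out is exactly where incomparability of the $A_i$'s (together with their connectedness) is used, and I would make sure that argument is spelled out. Once these combinatorial points are nailed down, both halves follow by short pigeonhole-style arguments, and no computation is needed.
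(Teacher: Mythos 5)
Your proof is correct and follows essentially the same route as the paper: for the lower bound you use the subgraphs obtained by deleting one component (your $A_{S_i}$ are exactly the paper's $A_i$), obtain distinct duals by patching two homomorphisms together, and for the upper bound you use the disjoint union $D_1\sqcup D_2$ together with the antichain property of $\Ds$ and connectedness of the graphs in $\As$. The only cosmetic difference is that you justify ``no member of $\As$ maps to $A_{S}$'' via pairwise incomparability of the components, whereas the paper gets this more directly from $A_i\to A$, the antichain property, and $A\not\to A_i$ (since $A$ is a core); both are valid.
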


\begin{proof}
Take a graph $A\in\As$. Let $A_1,\ldots,A_k$ be the graphs obtained from $A$ by removing a single one of its $k$ components. For any $1\le i\le k$ we have $A\not\to A_i$ (since  $A$ is a core) furthermore we have $A_i\to A$, so as $\As$ is an antichain it contains no graph that has a homomorphism to $A_i$. As $(\As,\Ds)$ is a duality pair each $A_i$ has a graph $D_i\in\Ds$ with $A_i\to D_i$. If we have $D_i=D_j$ for some $1\le i<j\le k$ we can construct an $A\to D_i$ homomorphism by extending the $A_i\to D_i$ homomorphism to the missing component using the corresponding restriction of the $A_j\to D_i$ homomorphism. Since we must have $A\not\to D$ for $D\in\Ds$ all graphs $D_i$ are distinct and thus $|\Ds|\ge k$ as claimed.

Now assume that all $A\in\As$ is connected but still we have two graphs $D_1 \ne D_2$ in $\Ds$. As $\Ds$ is an antichain the disjoint union $D$ of $D_1$ and $D_2$ does not have a homomorphism to any member of $\Ds$. By the duality pair property we must have a graph $A\in \As$ such that $A\to D$. As $A$ is connected this homomorphism maps $A$ either to $D_1$ or to $D_2$, giving $A\to D_1$ or $A\to D_2$, a contradiction.
\end{proof}

An immediate corollary of this lemma is that if an antichain has a finite dual its members have a bounded number of components. For this we do not need the full strength of the antichain condition, it is enough to assume that we do not have homomorphism between two graphs of $\As$ that avoids an entire connected component of the target graph. While replacing the left hand side of a duality pair with an equivalent antichain is not always possible, it is easy to see that replacing the left hand side of a duality pair with an equivalent family satisfying this constraint is always possible. If the right side is finite, then after this transformation the graphs in the left side have a bounded number of components.

We end this section by showing that the maximum degree is also bounded in an antichain of core graphs that has a finite dual.

\begin{lemma}\label{lem:degree}
Let $(\As,\Ds)$ be a duality pair with $\As$ an antichain consisting of cores and $\Ds$ finite. Any vertex of any graph $A\in\As$ has total degree $($this is the sum of the in-degree and out-degree$)$ at most $d_0=\sum_{D\in\Ds}|V(D)|$.
\end{lemma}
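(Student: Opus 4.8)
The plan is to argue by contradiction: fix $A\in\As$ and a vertex $v$ of $A$ of total degree $d$, assume $d>d_0$, and produce a homomorphism from $A$ into some $D\in\Ds$, which is forbidden because $(\As,\Ds)$ is a duality pair. The first move is to reduce to the case that $A$ is a forest. If $A$ were not a forest, Theorem~\ref{th:nocycle-antichain} would give a $B\in\As$ with $B\to A$ and $A\not\to B$; but a homomorphism $B\to A$ between two members of an antichain of cores forces $B=A$, and then $A\to A$ together with $A\not\to A$ is absurd. So $A$ is a forest, and hence the $d$ edges $e_1,\dots,e_d$ incident to $v$ reach $d$ \emph{distinct} connected components $T_1,\dots,T_d$ of $A-v$ (otherwise a cycle through $v$ would appear). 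For each $i$ let $B_i$ be the ``arm'' with vertex set $\{v\}\cup V(T_i)$ and edge set $E(T_i)\cup\{e_i\}$, and let $A_i$ be the graph obtained from $A$ by deleting $V(T_i)$ together with the edge $e_i$, so that $v$ and all the other arms (and any components of $A$ not containing $v$) survive. Any two distinct arms meet exactly in $v$, and every edge of $A$ lies in exactly one arm or in a component disjoint from $v$.

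The key step is to show that each $A_i$ admits a homomorphism into $\Ds$. Since $A_i$ is a subgraph of $A$ with strictly fewer vertices and $A$ is a core, there is no homomorphism $A\to A_i$: composing a hypothetical one with the inclusion $A_i\hookrightarrow A$ would give a non-surjective endomorphism of a core. Now apply the duality pair property to the graph $A_i$. If some $B\in\As$ had $B\to A_i$, then $B\to A_i\hookrightarrow A$ would give $B\to A$, hence $B=A$ by the antichain property, hence $A\to A_i$, contradicting the previous sentence. Therefore $A_i\to D_i$ for some $D_i\in\Ds$; fix such a $D_i$ and a homomorphism $h_i\colon A_i\to D_i$, and record the vertex $h_i(v)\in V(D_i)$. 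Note also that $A\not\to D$ for every $D\in\Ds$, since $A\in\As$ and $(\As,\Ds)$ is a duality pair.

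The argument then finishes by pigeonhole. There are exactly $d_0=\sum_{D\in\Ds}|V(D)|$ pairs $(D,p)$ with $D\in\Ds$ and $p\in V(D)$, so from $d>d_0$ we obtain two indices $i\neq j$ with $(D_i,h_i(v))=(D_j,h_j(v))$, say equal to $(D,a)$. Every arm $B_k$ with $k\neq i$ is a subgraph of $A_i$, so $h_i$ restricts to a homomorphism $B_k\to D$ sending $v\mapsto a$; and the remaining arm $B_i$ is a subgraph of $A_j$, so $h_j$ restricts to a homomorphism $B_i\to D$ also sending $v\mapsto a$. Since distinct arms overlap only in $v$ and all these restrictions agree there, they glue together — with the restriction of $h_i$ to the components of $A$ not containing $v$, if any — into a homomorphism $A\to D$, contradicting $A\not\to D$. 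Hence $d\le d_0$. (The degenerate cases $d\le 1$ are immediate, since then $\Ds$ is forced to be nonempty and $d_0\ge 1$.)

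The step I expect to be the crux is the reduction to forests combined with the decision to delete \emph{whole arms} rather than single edges: the gluing in the last paragraph works precisely because the arms are attached at the single cut vertex $v$, so two partial homomorphisms into $D$ that agree at $v$ automatically combine. By contrast, homomorphisms of $A-e_i$ and $A-e_j$ into the same $D$ that merely agreed at $v$ would have no reason to combine into a homomorphism of $A$ — which is also why this line of argument does not obviously extend beyond forests. Everything else (the core/subgraph argument for $A\not\to A_i$ and the pigeonhole bookkeeping) is routine.
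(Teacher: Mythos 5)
Your proof is correct and follows essentially the same route as the paper's: reduce to the forest case via Theorem~\ref{th:nocycle-antichain}, delete one arm at $v$ at a time, use the antichain-of-cores property to force each $A_i$ into some $D_i\in\Ds$, pigeonhole on the pairs $(D_i,f_i(v))$, and glue two of the resulting partial homomorphisms at $v$. The only differences are expository — you spell out the reduction to forests and the reason no member of $\As$ maps to $A_i$ in more detail than the paper does.
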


\begin{proof}
Let $A\in\As$ and $v\in V(A)$ and suppose the total degree  $d$ of $v$ is larger than $d_0$. By Theorem~\ref{th:nocycle-antichain} $A$ is  a forest, so $v$ cuts its component of $A$ into $d$ parts. Let us form the subgraphs $A_1,\ldots,A_d$ of $A$ by removing a single one of these parts from $A$. That is, each $A_i$ is obtained from $A$ by removing an edge $e$
from $A$ that connects $v$ to another vertex $w$ (with either orientation) and also removing the connected component of $w$ from the resulting graph. As $\As$ is an antichain of cores no member of $\As$ has a homomorphism to any of these subgraphs $A_i$, so by the duality pair property, there must be homomorphisms $f_i:A_i\to D_i$ from $A_i$ to certain graphs $D_i\in D$. From $d>d_0$ we must have $1\le i<j\le d$ with $D_i=D_j$ and $f_i(v) =f_j(v)$. We construct an $A\to D_i$ homomorphism by extending $f_i$ with the restriction of $f_j$ to the part of $A$ missing from $A_i$. The contradiction finishes the proof.
\end{proof}

\section{Antichains of paths}\label{sec:path}
\noindent
In this section we give concrete examples of infinite antichains of paths with or without a finite dual. When looking for a finite dual it is always enough to consider duals consisting of a single graph by Lemma~\ref{lem:connected}.

To speak of (oriented) paths we use the natural correspondence between them and words over the binary alphabet $\{+,-\}$. We use standard notation with respect to these words, namely a word is a member of $\{+,-\}^*=\cup_{k\ge0}\{+,-\}^k$, where $\{+,-\}^k$ is the set of length $k$ sequences from the alphabet. For $x,y\in\{+,-\}^*$ and $k\ge0$ we write $xy$
for the concatenation of $x$ and $y$ and $x^k$ for the word obtained by concatenating $k$ copies of $x$.

The correspondence is given by the map $p$ mapping $\{+,-\}^*$ to paths. For a word $x=x_1\ldots x_k\in\{+,-\}^k$ let $p(x)$ stand for the path consisting of $k$ edges with the $i$'th edge oriented forward if $x_i=+$ and backward otherwise. A bit informally we will refer to the first and last vertices of $p(x)$ in their obvious meaning, although formally the end vertices of the path $p(x)$ cannot be distinguished without knowing $x$. We say that a
homomorphism $f:p(x)\to G$ {\em maps $p(x)$ from $u$ to $v$} if the image of the first vertex of $p(x)$ is $u\in V(G)$ and the image of the last vertex is $v\in V(G)$. Note that although all (isomorphism classes of) paths will be obtained as images in this map the
correspondence is not one-one: up to two distinct words may be mapped to isomorphic paths (with the role of the first and last vertices reversed), for example $p(++-)$ and $p(+--)$ are isomorphic.

\subsection{Antichains without a final dual}

A trivial observation is the following: Take any infinite antichain of paths (as we will see such antichains are easy to find). The cardinality of the set of its subsets is continuum, and no two can have the same set for dual. Thus many have no finite dual, as the set of finite families of graphs is countable. This cardinality argument gives no explicit family without a finite dual. Here we set out to construct such a set.

\begin{lemma}\label{lem:power}
Let $G$ be a graph with $|V(G)| \le k$ and assume that for some $x\in\{+,-\}^*$ we have $p(x^k)\rightarrow G$. Then for each $\ell\ge0$ we also have $p(x^\ell)\rightarrow G$.
\end{lemma}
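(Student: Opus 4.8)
The plan is to translate homomorphisms of $p(x^k)$ into walks in a suitable auxiliary digraph on $V(G)$, and then exploit the fact that $G$ has too few vertices to avoid a closed walk.

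First I would dispose of the degenerate case $x=\epsilon$: then every $p(x^\ell)$ is a single vertex and $G$ is nonempty (since $p(x^k)\to G$), so there is nothing to prove. So assume $x$ is a nonempty word of length $n\ge 1$. Fix a homomorphism $f:p(x^k)\to G$ and record the images $u_0,u_1,\ldots,u_k\in V(G)$ of the $k+1$ "breakpoint'' vertices of $p(x^k)$, namely the vertices at positions $0,n,2n,\ldots,kn$ along the path. Restricting $f$ to the $i$-th block, i.e.\ the subpath between positions $(i-1)n$ and $in$, which is a copy of $p(x)$, shows that for each $1\le i\le k$ there is a homomorphism mapping $p(x)$ into $G$ from $u_{i-1}$ to $u_i$.

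This motivates defining a digraph $H$ on vertex set $V(G)$ with an arc $u\to v$ exactly when some homomorphism maps $p(x)$ into $G$ from $u$ to $v$. The observation above says that $u_0u_1\cdots u_k$ is a walk of length $k$ in $H$. Since $H$ has at most $k$ vertices, this walk on $k+1$ vertices repeats a vertex, hence it contains a closed subwalk of positive length, and therefore $H$ contains a directed cycle $w_0\to w_1\to\cdots\to w_{r-1}\to w_0$ with $r\ge 1$ (a shortest closed subwalk of positive length has no repeated internal vertex). Finally I would read this cycle back into homomorphisms: for any $\ell\ge 0$, follow the cycle $w_0\to w_1\to\cdots$ (indices mod $r$) for $\ell$ steps and concatenate the corresponding homomorphic copies of $p(x)$; consecutive copies agree on the shared endpoint (both mapped to the same $w_s$), so they glue to a homomorphism $p(x^\ell)\to G$. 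For $\ell=0$ this is just the single vertex $p(x^0)$ mapped to $w_0$. Hence $p(x^\ell)\to G$ for every $\ell$, as required.

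I expect the only genuine subtlety is the cycle extraction: a naive "pump the first repeated block'' argument yields $p(x^\ell)\to G$ only for $\ell$ lying in an arithmetic progression (together with the small values $\ell\le k$ obtained by truncation), which is not all of $\ell\ge 0$. One really needs an honest directed cycle of $H$, rather than an arbitrary closed walk, so that walks of every length are available from a single base vertex. The cases $x=\epsilon$ and $\ell=0$ warrant a one-line remark but are immediate.
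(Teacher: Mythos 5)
Your proof is correct and follows essentially the same route as the paper: restrict the homomorphism to the $k$ blocks to get breakpoint images $u_0,\ldots,u_k$, pigeonhole a repetition, and wrap $p(x^\ell)$ around the resulting closed walk. The only difference is cosmetic: the paper works directly with the closed walk that is the image of $p(x^{j-i})$ (reducing positions modulo its length), so your extra step of extracting an honest directed cycle from the auxiliary digraph $H$ is harmless but unnecessary --- a closed walk already supports walks of every length from its base vertex.
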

\begin{proof}
By considering the homomorphism $p(x^k)\to G$ one finds vertices $v_0,\ldots, v_k$ in $G$ such that for each $1\le i\le k$ a suitable restriction of the homomorphism maps $p(x)$ from $v_{i-1}$ to $v_i$. By the pigeon hole principle we find $v_i=v_j$ for some $0\le i<j\le k$. Thus, we can map the $p(x^{j-i})$ to $G$ with both endpoints mapping to the same vertex. This closed walk can take the homomorphic image of $p(x^\ell)$ for any $\ell$.
\end{proof}

\begin{example}
Let $Q_k=p((+(+-)^k)^k++)$ and consider any infinite family $\As\subseteq\{Q_k\mid k\ge1\}$. Then $\As$ is an antichain of paths and has no finite dual.
\end{example}
\begin{proof}
To see that $\As$ is an antichain observe that the {\em hight} of a path defined as the maximal difference between forward and backward edges in a sub-path cannot be decreased by a homomorphism. As the hight of $Q_k$ is $k+2$ we have $Q_k\not\to Q_\ell$ for $\ell<k$. But $p(+(+-)^k++)$ is a sub-path of $Q_k$ and even this sub-path does not map to $Q_\ell$ for $\ell>k$. A similar argument also shows that all $Q_k$ are cores: as deleting either the first or the last edge of $Q_k$ decreases its hight any homomorphism $Q_k\to Q_k$ must be onto and thus an isomorphism.

Assume $(\As,\Ds)$ is a duality pair. Let $Q_k\in\As$ and consider $Q_k'=p((+(+-)^k)^k)$. Clearly, $Q_k'\to Q_k$, so we have $Q_\ell\not\to Q_k'$ for $\ell\ne k$ by the antichain property and $Q_k\not\to Q_k'$ since $Q_k$ is a core. So we must have $D\in\Ds$ with $Q_k'\to D$. We claim that $|V(D)|>k$. Indeed, otherwise by Lemma~\ref{lem:power} we have $Q_k\to p((+(+-)^k)^{k+1})\to D$, a contradiction. As $\As$ is infinite $k$ could be chosen arbitrarily large, so $\Ds$ must have arbitrarily large graphs, it cannot be finite.
\end{proof}

\subsection{Infinite antichains of paths with a finite dual}

Our first infinite-finite antichain duality pair, the $s=3$ case of the next example, is the smallest possible such example in the sense that the dual is a single graph on four vertices, while no graph or family of graphs on fewer vertices is a dual of an infinite antichain.

\begin{example}\label{ex:pd}
Let $P^s_k=p(+^s(-+^{s-1})^k+)$ for $s\ge1$, $k\ge0$ and let $D_s$ be the  graph obtained from the transitive tournament on $s+1$ vertices by deleting the edge connecting the source and the sink. Then $(\{P^s_k\mid k\ge0\},\{D_s\})$ is an antichain duality pair of cores for
$s\ge3$.
\end{example}

\begin{proof}
To see that the infinite side is an antichain of cores we partition $P^s_k$ into $k+2$ parts, the first being the directed path $p(+^s)$, the next $k$ parts being $p(-+^{s-1})$, the last part being a single edge. In any $P^s_k\to P^s_\ell$ homomorphism the first part of $P_k^s$ must not map to last $s$ edges of $P^s_\ell$ because that would make the mapping of the next part impossible. So it must be mapped identically to the first part of $P^s_\ell$
and then the next $k$ parts of $P^s_k$ must also map identically to the next $k$ part of $P^s_\ell$. This only works if $\ell\ge k$. But if $\ell>k$ the last edge of $P^s_k$ cannot be mapped anywhere. So we must have $k=\ell$ and the homomorphism must be the identity.

To see that $D_s$ is also a core it is enough to note that it is acyclic and has a directed Hamiltonian path. Let us denote the vertices along this path by $v_0,\ldots,v_s$.

We show $P^s_k\not\to D_s$ similarly to the antichain property. Indeed, the first part of $P^s_k$ (forming a directed path) has a single homomorphism to $D_s$ ending at $v_s$. Each of the next $k$ parts must map to the path $v_sv_1v_2\cdots v_s$. But as $v_s$ is a sink, this homomorphism cannot be extended to the last edge of $P^s_k$.

Let $G$ be an arbitrary graph. By the statement in the last paragraph we cannot have $P^s_k\to G\to D_s$ for any $k\ge0$. So it remains to prove that either $P^s_k\to G$ for some $k\ge0$ or we have $G\to D_s$.

We call a vertex $v\in V(G)$ {\em type} $i$ for $0\le i\le s$ if it is the image under a homomorphism of the last vertex of a path $p(+(+^{s-1}-)^k+^i)$ for some $k\ge0$. Note that for $i\ge1$ a type $i$ vertex is the image of the last vertex of the path $p(+^i)$ so it is also a type $i-1$ vertex.

If there is a type $s$ vertex in $G$ we clearly have $P^s_k\to G$ for some $k\ge0$ and we are done.

If there is no type $s$ vertex in $G$ we define $\phi:V(G)\to V(D_s)$ by setting $\phi(v)=v_0$ if $v$ is not type 0 and for $1\le i\le s$ setting $\phi(v)=v_i$ if $v$ is not type $i$ but $v$ is type $i-1$.

We claim that $\phi$ is a $G\to D_s$ homomorphism.

Let $(u,v)$ be an edge of $G$. This makes $v$ the endpoint of an edge, so it is type 0. Moreover, if $u$ is type $i$, then the path $p(+(+^{s-1}-)^k+^i)$ mapping to $G$ and ending at $u$ can be extended by the $(u,v)$ edge, making $v$ a type $i+1$ vertex. Thus if $\phi(u)=v_j$ and $\phi(v)=v_{j'}$ we must have $j<j'$. It remains to prove that $\phi(u)=v_0$ and $\phi(v)=v_s$ is impossible. Indeed, $\phi(v)=v_s$ implies $v$ is type $s-1$, so it is the image of the last vertex of a path $p(+(+^{s-1}-)^k+^{s-1})$. Extending this with the $(u,v)$ edge we get that $u$ is the image of the last vertex of the path $p(+(+^{s-1}-)^{k+1})$ making $u$ a type 0 vertex. This finishes the proof.
\end{proof}

\subsection{Regularity}

From the two examples considered so far one can notice that relevance of regular languages. Indeed, while the family of words $\{+^s(-+^{s-1})^k+\mid k\ge0\}$ is a regular language for any $s$, the family $\{+(+-)^k)^k++\mid k\ge1\}$ or any of its infinite subfamilies are not regular. This connection was the basis of our upcoming paper \cite{caterpillars} that
establishes regularity as necessary and sufficient condition for having a finite dual in this case. We state the following easy observation regarding regularity to further motivate this connection.

\begin{lemma}
Let $G$ be an arbitrary graph. The set $\{x\in\{+,-\}^*\mid p(x)\to G\}$ is a regular language.
\end{lemma}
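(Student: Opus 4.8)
The plan is to realize the language $L_G=\{x\in\{+,-\}^*\mid p(x)\to G\}$ as the language accepted by a nondeterministic finite automaton whose states encode ``partial homomorphism information'', namely the set of vertices of $G$ reachable as the image of the current endpoint of the path read so far. First I would set up the automaton $\mathcal N$ with state set $2^{V(G)}$ (or, slightly more economically, $V(G)$ itself together with nondeterminism); since $G$ is finite this is a finite set. The start state records that the first vertex of $p(x)$ may be mapped to any vertex of $G$, so it is $V(G)$ (or, in the nondeterministic-over-$V(G)$ formulation, every vertex of $G$ is an initial state). The transition on reading the letter $+$ from a vertex $u$ goes to any $v$ with $(u,v)\in E(G)$; on reading $-$ it goes to any $v$ with $(v,u)\in E(G)$. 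Every state is accepting. A routine induction on the length of $x$ then shows that after reading $x$ the set of states reachable from the start is exactly $\{v\in V(G)\mid \text{some homomorphism maps }p(x)\text{ with last vertex to }v\}$, and this set is nonempty precisely when $p(x)\to G$; hence $\mathcal N$ accepts $x$ iff $x\in L_G$.

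The key steps, in order, are: (1) define the NFA $\mathcal N=(V(G),\{+,-\},\delta,V(G),V(G))$ with $\delta(u,+)=\{v:(u,v)\in E(G)\}$ and $\delta(u,-)=\{v:(v,u)\in E(G)\}$; (2) prove by induction on $k$ that for $x=x_1\cdots x_k$, the set of states reachable from some initial state after reading $x$ equals the set of possible images of the last vertex of $p(x)$ under a homomorphism $p(x)\to G$ — the inductive step is immediate from the definition of homomorphism applied to the $k$'th edge of $p(x)$, oriented according to $x_k$; (3) conclude that $x$ is accepted iff this set is nonempty iff $p(x)\to G$; (4) invoke the standard fact that every NFA language is regular (Kleene / Rabin–Scott). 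One should also note the boundary case $x$ empty: $p(x)$ is a single vertex, $L_G$ contains the empty word whenever $V(G)\neq\emptyset$, which matches the automaton accepting $\varepsilon$ via its initial (= accepting) states; if $G$ has no vertices the language is empty, still regular.

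I do not expect any real obstacle here: the construction is the ``obvious'' product/subset-style automaton and the verification is a one-line induction once the correspondence between paths and words (already fixed in the paper via the map $p$) is in hand. The only point requiring a word of care is the fact that a word and its reverse may map to the same path, so ``the last vertex of $p(x)$'' is not intrinsic to the path; but this is harmless because the automaton is defined directly in terms of the word $x$ read left to right, matching exactly the convention under which $f:p(x)\to G$ ``maps $p(x)$ from $u$ to $v$'' was defined, so no ambiguity arises. The statement then follows.
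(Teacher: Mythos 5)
Your construction is exactly the automaton used in the paper: states are the vertices of $G$, all of them initial and accepting, with $+$-transitions along edges and $-$-transitions against them, and the correctness verification is the same routine induction the paper leaves to the reader. The proposal is correct and takes essentially the same approach.
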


\begin{proof}
We turn the graph $G$ into a nondeterministic finite automaton. The states of the automaton are the vertices of $G$ and each state is an initial and also a terminal state. For each edge $(u,v)$ of $G$ we make the transition from $u$ to $v$ possible for the letter $+$ and the transition from $v$ to $u$ possible for the letter $-$. It is straightforward to see that this automaton accepts the language in the lemma.
\end{proof}

\section{Antichains of trees}\label{sec:tree}
\noindent
In this section we give infinite-finite antichain duality pairs where the infinite side has trees that are not paths. The following lemma is instructive for this.

\begin{lemma}\label{lem:append}
Let $(\As,\Ds)$ be a duality pair. Let us modify each $A\in\As$ by enriching  it with new  vertices and edges:  from each sink of $A$ we start a new edge to a separate new vertex . Let $\As'$ be the family of these modified graphs. Let us modify each graph $D\in\Ds$ by adding a single new vertex and edges to this vertex from every vertex of $D$. Let $\Ds'$ be the family of these modified graphs. Then $(\As',\Ds')$ is a duality pair. If $\As$ is an antichain so is $\As'$, if $\Ds$ is an antichain, so is $\Ds'$, furthermore if $\As$ and $\Ds$ consists of cores so do $\As'$ and $\Ds'$.
\end{lemma}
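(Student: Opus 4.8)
The plan is to verify the duality pair property for $(\As',\Ds')$ directly, then check the antichain and core claims as straightforward corollaries of the construction. Write $A^+$ for the graph obtained from $A\in\As$ by appending a pendant edge (oriented away) at each sink, and $D^+$ for the graph obtained from $D\in\Ds$ by adding a new vertex $w_D$ together with an edge from every old vertex to $w_D$. Note $w_D$ is itself a sink of $D^+$ and receives edges from everything, including itself? No --- only from the vertices of $D$, so $w_D$ is a genuine sink with in-edges from all of $V(D)$. The key structural observation to record first is: for any graph $G$, a homomorphism $f\colon G\to D^+$ either misses $w_D$ entirely (hence is a homomorphism $G\to D$) or sends some vertices to $w_D$, and since $w_D$ is a sink, the preimage $f^{-1}(w_D)$ is a set of sinks of $G$ (no out-edges), while everything else maps into $D$.

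Next I would prove the two directions of duality. For a given graph $G$, I need exactly one of ``$A^+\to G$ for some $A\in\As$'' or ``$G\to D^+$ for some $D\in\Ds$.'' The natural bridge is to relate homomorphisms into/out of $G$ to homomorphisms into/out of the graph $G^-$ obtained from $G$ by deleting all its sinks. I expect the clean statement to be: $G\to D^+$ for some $D\in\Ds$ if and only if $G^-\to D$ for some $D\in\Ds$ --- the ``if'' direction because any $G^-\to D$ extends to $G\to D^+$ by sending the deleted sinks to $w_D$ (legal since every vertex of $G^-$ maps into $D$ and $w_D$ absorbs all in-edges); the ``only if'' direction because restricting $f\colon G\to D^+$ to $G^-$ (whose vertices are non-sinks, hence not in $f^{-1}(w_D)$... wait, a non-sink of $G$ could still map to $w_D$ if... no: $w_D$ is a sink of $D^+$, so any vertex with an out-edge in $G$ cannot map to $w_D$) lands in $D$. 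Dually, I would argue $A^+\to G$ for some $A\in\As$ if and only if $A\to G^-$ for some $A\in\As$: given $g\colon A^+\to G$, the pendant vertices added to $A$ are sinks, and the original sinks of $A$ now have out-edges in $A^+$; I claim $g$ carries $V(A)$ into $V(G^-)$. This needs that no vertex of $A$ (inside $A^+$) is a sink of $A^+$ --- true, since every sink of $A$ got a new out-edge, and every non-sink already had one --- so $g$ maps every $A$-vertex to a non-sink of $G$, i.e.\ into $G^-$. Conversely $A\to G^-\subseteq G$ extends to $A^+\to G$ by sending each pendant vertex along the image of its attaching edge (its target in $G^-$ has the requisite out-edge, which we follow to some vertex of $G$). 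With these two equivalences in hand, apply the duality pair property of $(\As,\Ds)$ to the graph $G^-$: exactly one of ``$A\to G^-$'' or ``$G^-\to D$'' holds, and by the equivalences this transfers to exactly one of the two alternatives for $G$ with respect to $(\As',\Ds')$.

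Then I would dispatch the remaining claims. For the antichain property of $\As'$: suppose $h\colon A^+\to B^+$ for distinct $A,B\in\As$; by the same sink-tracking argument $h$ maps $V(A)$ into the non-sinks of $B^+$, which are exactly $V(B)$ (all original $B$-vertices are non-sinks in $B^+$), and on that part $h$ is a homomorphism $A\to B$, contradicting that $\As$ is an antichain (distinct members of $\As'$ come from distinct members of $\As$ since the construction is injective on cores). For $\Ds'$: a homomorphism $D_1^+\to D_2^+$ for distinct $D_1,D_2\in\Ds$ --- here $w_{D_1}$ must go to a sink of $D_2^+$, and the only sink is $w_{D_2}$ provided each $D\in\Ds$ (a core, and not all of whose vertices are sinks unless $D$ is an isolated vertex) has $w_D$ as its unique sink; the old vertices of $D_1$, having out-edges to $w_{D_1}$, map to non-sinks, i.e.\ into $V(D_2)$, yielding $D_1\to D_2$, again a contradiction. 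For cores: given $A$ a core, any endomorphism of $A^+$ restricts (by sink-tracking) to an endomorphism of $A$, which is an automorphism; an automorphism of $A$ permutes the sinks of $A$ and hence extends uniquely to a permutation of the pendant vertices, and one checks the only extensions to $A^+$ are automorphisms, so $A^+$ is a core. Given $D$ a core, an endomorphism of $D^+$ fixes the unique sink $w_D$ setwise (sends $w_D$ to a sink, which must be $w_D$), restricts to an endomorphism of $D$ hence an automorphism, and is therefore an automorphism of $D^+$.

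The main obstacle I anticipate is the bookkeeping around sinks: one must be careful that ``non-sink of $G$'' is preserved in the right directions, that the pendant vertices of $A^+$ are the \emph{only} new sinks and the original $A$-vertices become non-sinks, and that $w_D$ is the \emph{unique} sink of $D^+$ (which can fail in degenerate cases, e.g.\ $D$ a single vertex with a loop, or $D$ edgeless --- these boundary cases should be noted and either excluded as non-cores or handled separately). Everything else is the routine ``restrict the homomorphism, extend the homomorphism'' shuffle, and the heart of the argument is the pair of equivalences reducing statements about $G$ to statements about $G^-$, after which the duality of $(\As,\Ds)$ does the work.
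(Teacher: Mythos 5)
Your proof is correct and follows essentially the same route as the paper's: pass to the subgraph of non-sink vertices, apply the duality of $(\As,\Ds)$ there, and use the observation that all original vertices are non-sinks in the modified graphs while the added vertices are sinks to restrict or extend homomorphisms and to transfer the antichain and core properties. The degenerate cases you flag for $\Ds'$ do not actually arise, since every vertex of $D$ acquires an out-edge to the new vertex, which is therefore always the unique sink of the modified graph.
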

\begin{proof}
Let $A'\in\As'$ be the modification of $A\in\As$ and $D'\in\Ds'$ be the modification of $D\in\Ds$. If we have a homomorphism $f:A'\to D'$, then its restriction to $A$ must map to $D$ as the single vertex of $D'\setminus D$ is a sink in $D'$, but no vertex of $A$ is a sink in $A'$. But the existence of an $A\to D$ homomorphism contradicts the fact that $(\As,\Ds)$ is a duality pair.

Let $G'$ be an arbitrary graph. We cannot have $A'\to G'\to D'$ for some $A'\in\As'$ and $D'\in\Ds'$ by the previous paragraph. It remains to show that $A'\to G'$ for some $A'\in\As'$ or $G'\to D'$ for some $D'\in\Ds'$.

Let $G$ be the subgraph of $G'$ induced by the non-sink vertices. As $(\As,\Ds)$ is a duality pair we either have $A\to G$ for some $A\in\As$ or $G\to D$ for some $D\in\Ds$. In the former case we can extend the homomorphism $A\to G$ to a homomorphism $A'\to G'$, where $A'\in\As'$ is the modified version of $A$. In the latter case we can extend the
homomorphism $G\to D$ to a homomorphism $G'\to D'$, where $D'\in\Ds'$ is the modified version of $D$, by sending all vertices of $G'\setminus G$ to the single vertex in $D'\setminus D$.

To see that the antichain and core properties are inherited to the modified sets consider two graphs $X$ and $Y$ from the same family $\As$ or $\Ds$ and their modifications $X'$ and $Y'$. Restricting a homomorphism $X'\to Y'$ to $X$ we get a homomorphism $X\to Y$. Indeed, all vertices in $Y'\setminus Y$ are sinks and no vertex in $X$ is sink in $X'$. So if the family was antichain, then $X=Y$ and so the modified family is also an antichain. If $X=Y$ is a core, then the $X\to Y$ homomorphism must be an isomorphism and it is easy to see that the original $X'\to Y'$ homomorphism must also be an isomorphism.
\end{proof}
\noindent Applying this lemma (possibly several times) for our earlier examples of infinite-finite antichain duality pairs we get several new such examples. Although the graphs on the left side of these pairs are no longer paths, they are still very similar to paths in structure.

\bigskip\noindent The examples in the next lemma show better the complexity that families with a finite dual can exhibit.

Let us consider the family $T_0$ of all finite rooted (undirected) binary trees satisfying that each vertex is either a leaf (no children) or it has two children: a left child and a right child. Note that the smallest member of $T_0$ has a single vertex.

Let $x,y,s,z\in\{+,-\}^*$ be words. We define the family of oriented trees $T(x,y,$ $s,z) = \{t(x,y,s,z)\mid t\in T_0\}$, where $t(x,y,s,z)$ is an oriented tree obtained from $t$ by
\begin{enumerate}[{\rm (A)}]
\item replacing each edge connecting a vertex $u$ to its left child $v$ by a   copy of $p(x)$ from $u$ to $v$,
\item replacing each edge connecting a vertex $u$ to its right child $w$ by a   copy of $p(y)$ from $u$ to $w$,
\item adding a path $p(s)$ from each leaf vertex of $t$ and \item adding a path $p(z)$ from the root of $t$.
\end{enumerate}
Let $G_1$ and $G_2$ be the graphs depicted on Figures 1 and 2, furthermore let
$$
T_1=T(+-,-+,--,++) \quad\hbox{and}\quad  T_2=T(+--,-+-,--,+++).
$$

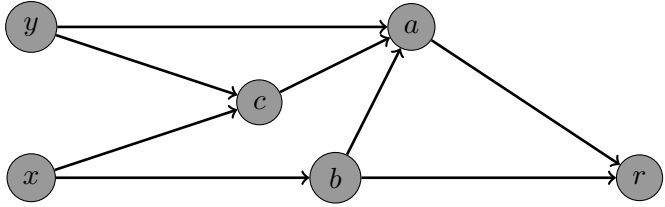
\begin{figure}[h]\label{f:g1}

\begin{tikzpicture}
\node  [shape=circle,draw,fill=black!40] (x) at ( 1,1) {$x$};
\node  [shape=circle,draw,fill=black!40] (y) at ( 1,3) {$y$};
\node  [shape=circle,draw,fill=black!40] (b) at ( 5,1) {$b$};
\node  [shape=circle,draw,fill=black!40] (c) at ( 4,2) {$c$};
\node  [shape=circle,draw,fill=black!40] (a) at ( 6,3) {$a$};
\node  [shape=circle,draw,fill=black!40] (r) at ( 9,1) {$r$};
\draw [line width=1pt,->] (x) -- (b);
\draw [line width=1pt,->] (x) -- (c);
\draw [line width=1pt,->] (y) -- (c);
\draw [line width=1pt,->] (y) -- (a);
\draw [line width=1pt,->] (b) -- (a);
\draw [line width=1pt,->] (b) -- (r);
\draw [line width=1pt,->] (c) -- (a);
\draw [line width=1pt,->] (a) -- (r);

\end{tikzpicture}
\caption{The graph $G_1$.}
\end{figure}

\begin{figure}[h]\label{f:g2}
\begin{tikzpicture}
\node  [shape=circle,draw,fill=black!40] (x) at ( 1,1) {$x$};
\node  [shape=circle,draw,fill=black!40] (y) at ( 1,5) {$y$};
\node  [shape=circle,draw,fill=black!40] (b) at ( 5,1) {$b$};
\node  [shape=circle,draw,fill=black!40] (c) at ( 4,3) {$c$};
\node  [shape=circle,draw,fill=black!40] (a) at ( 5.7,3) {$a$};
\node  [shape=circle,draw,fill=black!40] (r) at ( 7.4,4) {$r$};
\node  [shape=circle,draw,fill=black!40] (s) at ( 12,4.5) {$s$};
\draw [line width=1pt,->] (x) -- (b);
\draw [line width=1pt,->] (x) -- (a);
\draw [line width=1pt,->] (x) -- (c);
\draw [line width=1pt,->] (y) -- (c);
\draw [line width=1pt,->] (y) -- (r);
\draw [line width=1pt,->] (y) -- (s);
\draw [line width=1pt,->] (y) -- (a);
\draw [line width=1pt,->] (b) -- (a);
\draw [line width=1pt,->] (b) -- (r);
\draw [line width=1pt,->] (b) -- (s);
\draw [line width=1pt,->] (c) -- (a);
\draw [line width=1pt,->] (a) -- (r);
\draw [line width=1pt,->] (a) -- (r);
\draw [line width=1pt,->] (a) -- (s);
\draw [line width=1pt,->] (r) -- (s);
\draw [line width=1pt,->] (c) -- (r);

\end{tikzpicture}
\caption{The graph $G_2$.}
\end{figure}
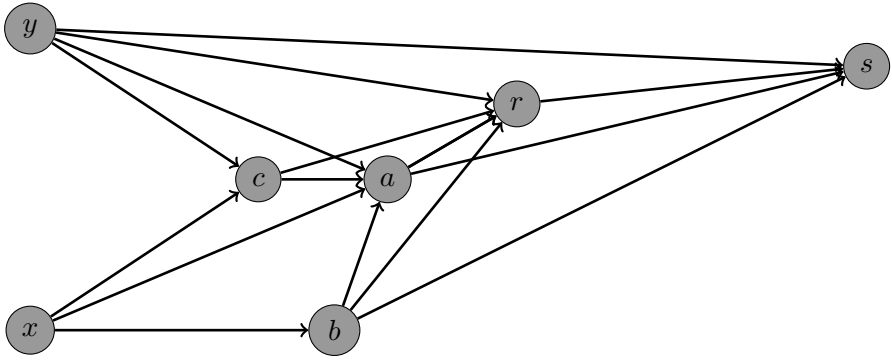

\begin{theorem}\label{th:fas}
\begin{enumerate}[{\rm(i)}]
\item $\big (T_1,\{G_1\}\big )$ is a duality pair of core graphs.
\item $\big (T_2,\{G_2\}\big )$ is an antichain duality pair of core graphs.
\end{enumerate}
\end{theorem}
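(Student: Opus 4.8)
The plan is to establish, for each $i\in\{1,2\}$, the equivalence: for every (finite) graph $H$ one has $H\to G_i$ if and only if no member of $T_i$ maps to $H$. Together with the assertions that every graph involved is a core — and, for part~(ii), that $T_2$ is an antichain — this is exactly the statement that $(T_i,\{G_i\})$ is a (antichain) duality pair. I would handle in turn: the core and antichain properties; the ``only if'' half, that no member of $T_i$ maps to $G_i$; and the ``if'' half, the covering statement.

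For the core claims: $G_1$ and $G_2$ are acyclic, so for any endomorphism $f$ the quantities $o(v)$ (length of the longest directed path leaving $v$) and $\iota(v)$ (length of the longest directed path entering $v$) satisfy $o(f(v))\ge o(v)$ and $\iota(f(v))\ge\iota(v)$ — here one uses that in an acyclic graph a directed walk has distinct vertices — and a short finite check combining these two functions with the in- and out-neighbourhoods forces $f$ to fix every vertex. For the left sides, an endomorphism of a finite tree shifts its level function by a constant and hence preserves levels exactly; the root pendant $p(z)$ ends at the unique highest vertex, and fixing that vertex one fixes every vertex by induction down the tree, using that the left-edge word and the right-edge word are non-isomorphic oriented paths and that directed paths are rigid. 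Thus every member of $T_1$ and $T_2$ is a core. For part~(ii) one checks in addition that $T_2$ is an antichain: in any homomorphism $t(x,y,s,z)\to t'(x,y,s,z)$ between members of $T_2$, the vertices inherited from the binary trees must map to such vertices and each copy of $p(+--)$, respectively $p(-+-)$, to a copy of the same word — this is precisely where the length-three words succeed where the length-two words of $T_1$ need not — so the two rooted binary trees are isomorphic.

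Next, that no member of $T_1$ maps to $G_1$. For an arbitrary graph $H$, call $v\in V(H)$ a \emph{subroot} if the oriented tree obtained from some binary tree $t$ by the steps (A), (B), (C) alone (omitting step (D), the root pendant) maps to $H$ with the root of $t$ sent to $v$. The set $S(H)$ of subroots is the least $S\subseteq V(H)$ with: $v\in S$ if $v$ has an incoming directed path of length $2$ (matching the leaf pendant $p(--)$); and $v\in S$ if there exist $w_L,w_R\in S$ and vertices $m,n$ with $(v,m),(w_L,m),(n,v),(n,w_R)\in E(H)$ (a copy of $p(+-)$ from $v$ to $w_L$ and a copy of $p(-+)$ from $v$ to $w_R$). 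Then a member of $T_1$ maps to $H$ exactly when some vertex of $S(H)$ has an outgoing directed path of length $2$ (the root pendant $p(++)$). For $H=G_1$ the base clause yields $\{a,r\}$; the vertices admitting a suitable $p(+-)$-connection into $\{a,r\}$ form $\{a,b\}$ while those admitting a $p(-+)$-connection form $\{a,c,r\}$, so the recursive clause adds nothing and $S(G_1)=\{a,r\}$. Since neither $a$ nor $r$ has an outgoing directed path of length $2$, no member of $T_1$ maps to $G_1$; the analogous computation with the words $p(+--),p(-+-),p(--),p(+++)$ settles $T_2$ and $G_2$.

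Finally, the covering direction: if no member of $T_1$ maps to $H$ — equivalently, no vertex of $S(H)$ has an outgoing directed path of length $2$ — then $H\to G_1$. I would build a homomorphism $\phi\colon V(H)\to V(G_1)$ by cases, splitting on whether $v\in S(H)$ and then on the values of $o(v)$ and $\iota(v)$ and on which of the $p(+-)$- and $p(-+)$-connections from $v$ into $S(H)$ are present; under the standing hypothesis a suitable list of such invariants sorts $V(H)$ into classes, one per vertex of $G_1$, and one checks edge by edge that $\phi$ preserves edges, each verification reducing — via $o(v)\ge o(w)+1$ along an edge $(v,w)$ together with the closure property defining $S(H)$ — to the hypothesis that no subroot has an outgoing directed path of length $2$. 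The same scheme with one extra level gives $H\to G_2$. I expect this covering step to be the main obstacle: the delicate part is to calibrate the case distinction so that it at once exhausts $V(H)$, matches $V(G_1)$ (respectively $V(G_2)$) exactly, and survives every edge check, whereas the core, antichain and no-crossing arguments are comparatively routine finite verifications.
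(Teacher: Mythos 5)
Your treatment of the ``forward'' half is sound and in fact matches the paper's argument in a cleaner form: your least-fixed-point set $S(H)$ of subroots is exactly the paper's bottom-up induction showing that every vertex of $t$ must land in $\{a,r\}$ (respectively $\{a,r,s\}$), and your computation that the recursive clause adds nothing to $\{a,r\}$ in $G_1$, so that no subroot admits an outgoing directed walk of length two, is correct. The core and antichain verifications are only sketched, but the paper leaves those to the reader as well, so I will not press you on them.

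The genuine gap is the covering direction: if no member of $T_1$ maps to $H$, then $H\to G_1$ (and likewise for $T_2$, $G_2$). You describe a strategy --- partition $V(H)$ by membership in $S(H)$, by the lengths of the longest directed walks in and out, and by which of the $p(+-)$- and $p(-+)$-connections into $S(H)$ are present --- but you do not produce the actual map or check any edges, and you yourself flag calibrating the case split as ``the main obstacle.'' This is not a routine verification that can be deferred: it is the substance of the theorem, and it occupies most of the paper's proof. The paper's solution first partitions $V(H)$ into levels $L_0,\dots,L_3$ by the longest directed walk \emph{ending} at a vertex (using that $p(++++)\in T_1$ to bound the levels), then gives an explicit seven-step assignment in which $a$ is given to the subroots lying in $L_1$ and to certain vertices of $L_2$, $b$ is given to vertices of $L_1$ with an edge into $L_3$ \emph{or} with only ``half'' of a $p(+-)$-connection to an $a$-vertex, and so on; each potential bad edge is then excluded by splicing the witnessing homomorphisms together into a member of $T_1$ mapping to $H$. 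Note that the conditions actually needed (e.g.\ the disjunction defining the $b$-class, and the restriction of the $a$-class to $L_1\cup L_2$) are finer than the invariants on your list, so your proposed case distinction would not survive the edge checks without modification. Until the explicit assignments for $G_1$ and $G_2$ are written down and verified, the proof is incomplete.
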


\begin{proof}
We leave the simple proofs that all involved graphs are cores and that $T_2$  is an antichain to the diligent reader. Note that $T_1$ is not an antichain: if $t\in T_0$ and $t'$ is subtree of $t$ containing the root, then $t(+-,-+,--,++) \to t'(+-,-+,--,++)$. As a result (once (i) is proved) one also has that $(\{t_k(+-,$ $-+,--,++)\mid k\ge100\},\{G_1\})$ is a duality pair if
$t_k\in T_0$ is the depth $k$ full binary tree with $2^k$ leaves.

Assume for a contradiction that $f:t(+-,-+,--,++)\to G_1$. We claim that all vertices of $t$ must map to the vertices $a$ or $r$. This is certainly true for the leaves because of the attached paths $p(--)$ could not map to $G_1$ otherwise. Working with a bottom up induction assume that both the left and the right children of the vertex $u$ map to $a$ or $r$. In this case the paths from $u$ to its children must be mapped to $G_1$ from $f(u)$ to either $a$ or $r$. Then we must have $f(u)=a$ as from no other vertex of $G_1$ does  have both a path $p(+-)$ and a path $p(-+)$ to either $a$ or $r$. So the root vertex must also be mapped to $a$ or $r$ and the contradiction comes from no place in $G_1$ for the path $p(++)$
attached to the root.

For (i) it is left to prove that for any graph $X$ we either have $A\to X$ for an $A\in T_1$ or we have $X\to G_1$. For this we define the ``level'' $L_i\subseteq V(X)$ consists of the vertices of $X$ with a homomorphism $P_i\to X$ ending at $u$ but no homomorphism $P_{i+1}\to X$ ending at $u$. Here $P_i=p(+^i)$ is the directed path of $i$ edges and the levels $L_0$, $L_1$, $L_2$ and $L_3$ partition $V(x)$ or we have $P_4\to X$ and we are done since $P_4\in T_1$.

We construct the map $\phi: V(X)\to V(G_1)$ as follows:
\begin{enumerate}[(1)]
\item Set $\phi(u)=a$ for any vertex $u\in L_1$ that has a $t\in T_0$ and a homomorphism $f:t(+-,-+,--,\epsilon)\to X$ mapping the root of $t$ to $a$. Here $\epsilon$ stands for the empty word. \item Set $\phi(u)=b$ for any vertex $u\in L_1$ not yet mapped to $a$ that either has an edge $(u,v)$ to a vertex $v\in L_3$ or two edges $(u,v)$ and $(w,v)$ with $w\in L_1$ already mapped to $a$.
\item Set $\phi(u)=c$ for the remaining vertices $u\in L_1$.
\item Set $\phi(u)=x$ if $u\in L_0$ and there exists an edge $(u,v)$ with   $\phi(u)=b$.
\item Set $\phi(u)=y$ for the remaining vertices $u\in L_0$.
\item Set $\phi(u)=a$ if $u\in L_2$ and there is no $v\in L_1$ with   $\phi(u)=a$ and $(v,u)$ an edge.
\item Set $\phi(u)=r$ for all remaining vertices in $V(X)$.
\end{enumerate}
\smallskip
\noindent
If $\phi$ is a homomorphism $X\to G_1$ we are done. Otherwise one of the steps above made an edge in $X$ map outside $X$.

Steps 1--3 map the independent set $L_1$ so they caused no problem. In step 4 can create a problem if a vertex $u\in L_0$ has edges $(u,v)$ and $(u,w)$ with $v,w\in L_1$, $\phi(v)=b$ and $\phi(w)=a$. But in this case the homomorphisms triggering $\phi(v)=b$ and $\phi(w)=a$ in steps 2 and 1 can be combined (together with the $wuv$ path) to a homomorphism triggering $\phi(v)=a$ in the first step, a contradiction.

Step 5 cannot cause trouble as both $(y,a)$ and $(y,c)$ are edges in $G_1$.

Steps 6 or 7 cause trouble if there is a vertex $u\in L_2\cup L_3$ with $(v,u)$ an edge from a vertex $v\in L_0$ with $\phi(v)=x$. But then there is a vertex $w\in L_1$ with $(v,w)$ an edge and $\phi(w)=b$. Here again, the homomorphism of $p(++-+)$ to $X$ ending in the vertices $uvw$ can be combined to the homomorphism triggering $\phi(w)=b$ to obtain a homomorphism triggering $\phi(w)=a$, a contradiction.

Finally in step 7 we can map both ends of an $(u,v)$ edge to $r$. This happens if there exists an edge $(w,u)$ from a vertex $w\in L_1$ with $\phi(w)=a$. This may indeed happen, but then the homomorphism triggering $\phi(w)=a$ can be combined to the directed path $wuv$ to get $A\to X$ for a tree $A\in T_1$. This finishes the proof of part (i).

\medskip\noindent (ii) The proof of this part is only slightly more complicated.

Assume for a contradiction that $f:t(+--,-+-,--,+++)\to G_2$. We claim that all vertices of $t$ must map to the vertices $a$, $r$ or $s$. This can be shown exactly like the corresponding statement in part (i). So the root vertex must also be mapped to $a$ or $u$ and the contradiction comes from no place in $G_2$ for the path $p(+++)$ attached to the root.

Finally we assume $X$ is graph with no $A\to X$ homomorphism for any $A\in T_2$. We construct the homomorphism $\phi:X\to G_2$ similarly to part (i).  We partition $V(X)$ into levels $L_i$ as we did above. As $P_5\in T_2$ does not map to $X$, $V(X)$ is partitioned into the sets $L_0$, $L_1$, $L_2$, $L_3$  and $L_4$.
\begin{enumerate}[(1)]
\item Set $\phi(u)=a$ for any vertex $u\in L_1$ that has a $t\in T_0$ and a homomorphism $f:t(+--,-+-,--,\epsilon)\to X$ mapping the root of $t$ to $a$.
\item Set $\phi(u)=b$ for any vertex $u\in L_1$ not yet mapped to $a$ that has either a homomorphism mapping $p(+----)$ to $G_2$ from $u$ or a homomorphism of $p(+--)$ to $G_2$ from $u$ to a vertex $v\in L_1$ with $\phi(v)=a$.
\item Set $\phi(u)=c$ for the remaining vertices $u\in L_1$.
\item Set $\phi(u)=x$ if $u\in L_0$ and there exists an edge $(u,v)$ with   $\phi(u)=b$.
\item Set $\phi(u)=y$ for the remaining vertices $u\in L_0$.
\item Set $\phi(u)=a$ if $u\in L_2$ and there is no $v\in L_1$ with   $\phi(u)=a$ and $(v,u)$ an edge.
\item Set $\phi(u)=r$ for all remaining vertices $u\in L_2$.
\item Also set $\phi(u)=r$ for vertices $u\in L_3$ with no edge $(v,u)$ from a   vertex $v\in L_2$ with $\phi(v)=r$.
\item Set $\phi(u)=s$ for all remaining vertices $u\in V(X)$.
\end{enumerate}
The proof that $\phi$ is indeed a homomorphism is almost identical to the corresponding argument in part (i).
\end{proof}
\medskip\noindent
We finish the paper by giving a simple observation how to combine duality pairs to obtain new pairs with several graphs on the right side. For simplicity we restrict attention to combining two duality pairs with single graphs on the right hand side that are incomparable.

In the following lemma and example $A_1\cup A_2$ denotes the disjoint union of the graphs $A_1$ and $A_2$.

\begin{lemma}\label{lem:combine}
Let $(\As_1,\Ds_1)$ and $(\As_2,\Ds_2)$ be duality pairs and let us   partition $\As_i$ into $\As_i'=\{A\in\As_i\mid\exists B\in\As_{3-i}:B\to A\}$ and   $\As_i''=\As_i\setminus\As_i'$ for $i=1,2$.
\begin{enumerate}[{\rm(i)}]
\item $(\As,\Ds)$ is a duality pair, where $\Ds=\Ds_1\bigcup\Ds_2$   and $\As=\{A_1\cup A_2\mid A_1\in\As_1,A_2\in\As_2\}$.
\item $(\As',\Ds)$ is also a duality pair, where   $\As'=\As_1'\bigcup\As_2'\bigcup\{A_1 \cup A_2\mid A_1\in\As_1'',A_2\in\As_2''\}$.
\item If both $\As_i$ are antichain and $|\Ds_1|=|\Ds_2|=1$, then $\As'$ can be made an antichain   with removing possible duplicates: leaving one member only from each   equivalent pair of graph from $\As_1$ and $\As_2$.
\end{enumerate}
\end{lemma}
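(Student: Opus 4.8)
\emph{Proof plan.}
The plan is to handle the three parts in order, with essentially all of the content in~(iii). I will use throughout two elementary facts: $A_1\cup A_2\to G$ holds iff both $A_1\to G$ and $A_2\to G$; and a homomorphism from $A_1\cup A_2$ into a disjoint union $G_1\cup G_2$ groups the connected components of the source as $A_1\cup A_2=P\cup Q$ with $P\to G_1$ and $Q\to G_2$.

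For (i): a graph $G$ receives a homomorphism from a member of $\As$ exactly when it receives one from a member of $\As_1$ and one from a member of $\As_2$, which by the two given duality pairs is the negation of ``$G\to D$ for some $D\in\Ds_1\cup\Ds_2$''; and ``not both'' is clear, since $A_1\cup A_2\to G\to D$ with $D\in\Ds_i$ would send a member of $\As_i$ into $D$. For (ii) the ``not both'' direction is a short case check (if a member of $\As_i'$ maps into a graph $G$ with $G\to D\in\Ds_j$, compose with its $\As_{3-i}$-witness when $j\neq i$; if a union $A_1\cup A_2$ with $A_1\in\As_1''$, $A_2\in\As_2''$ does so, use the component $A_j$), and the covering direction follows from (i): if $G$ is not dominated by $\Ds$ then $A_1\to G$ and $A_2\to G$ for some $A_i\in\As_i$, and one of $A_1\in\As_1'$, $A_2\in\As_2'$, or ($A_1\in\As_1''$ and $A_2\in\As_2''$) exhibits the required member of $\As'$. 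Parts (i) and (ii) use neither the antichain nor the singleton hypothesis.

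For (iii), write $\Ds_1=\{D_1\}$ and $\Ds_2=\{D_2\}$ and first record the facts that drive the case analysis: every $A\in\As_i''$ satisfies $A\to D_{3-i}$ (no member of $\As_{3-i}$ maps to it, so $(\As_{3-i},\Ds_{3-i})$ sends it to $D_{3-i}$); no member of $\As_i$ maps to $D_i$, and no member of $\As_i'$ maps to $D_{3-i}$ (apply the relevant duality pair to that member); $\As_i'$ and $\As_i''$ are disjoint; and $\As_i''$ receives, by definition, no homomorphism from $\As_{3-i}$. The separating observation is then: every ``type-3'' member $A_1\cup A_2$ of $\As'$ (with $A_1\in\As_1''$, $A_2\in\As_2''$) has $A_1\cup A_2\to D_1\cup D_2$, whereas no member of $\As_1'\cup\As_2'$ does --- for if $X\in\As_1'$ split as $X=X'\cup X''$ with $X'\to D_1$ and $X''\to D_2$, then $X\not\to D_1$ forces $X''\not\to D_1$, so by duality a member of $\As_1$ maps to $X''$, hence by the antichain property $X$ itself does, giving $X\to X''\to D_2$, contradicting that no member of $\As_1'$ maps to $D_2$ (symmetrically for $\As_2'$).

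It then remains to classify a homomorphism $X\to Y$ between members of $\As'$. If $Y$ is type-3, then $X\to D_1\cup D_2$, so by the separating observation $X$ is type-3 too; and a component-splitting argument --- each piece of the split, combined with the two duality pairs and the antichains $\As_1,\As_2$, forces the two $\As_1''$-parts to agree and the two $\As_2''$-parts to agree --- gives $X=Y$. If $Y\in\As_1'\cup\As_2'$, then $X$ cannot be type-3 (a map $A_1\cup A_2\to Y$ gives $A_1\to Y$ or $A_2\to Y$, putting a member of some $\As_\ell''$ into the antichain $\As_\ell$ and hence equal to a member of $\As_\ell'$, impossible), so $X,Y\in\As_1'\cup\As_2'$; if both lie on the same side, $X=Y$ by the antichain property, while if, say, $X\in\As_1'$ and $Y\in\As_2'$, the $\As_2$-witness $B\to X$ yields $B\to Y$, so $B=Y$ and $Y\to X$, i.e.\ $X$ and $Y$ are equivalent --- exactly a duplicate. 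Since two equivalent members on the same side must coincide, these equivalences form a partial matching between $\As_1'$ and $\As_2'$, so deleting one member from each equivalent pair leaves an antichain. The genuinely delicate steps are the separating observation and the type-3-to-type-3 comparison: both rest on combining the component splitting with a careful accounting of which of $D_1,D_2$ each piece maps to, using the antichain hypothesis on \emph{both} $\As_1$ and $\As_2$; everything else is bookkeeping.
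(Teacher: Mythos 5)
Your proof is correct. Parts (i) and (ii) follow the paper's argument essentially verbatim, but your part (iii) takes a genuinely different route at the key step. The paper introduces the invariants $S_i(A)=\{B\in\As_i\mid B\to A\}$, shows they are singletons ($S_i(A_1\cup A_2)=\{A_i\}$ for the ``type-3'' members and $S_i(A)=\{A\}$ for $A\in\As_i'$), and reads off all possible homomorphisms from the monotonicity $A\to A'\Rightarrow S_i(A)\subseteq S_i(A')$; the crucial computation $S_i(A_1\cup A_2)=\{A_i\}$ rests on the fact that every member of $\As_1\cup\As_2$ is \emph{connected}, which the paper imports from Lemma~\ref{lem:connected} (and which strictly speaking also needs the members to be cores). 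You avoid connectivity entirely: wherever a member of $\As_i$ maps into a disjoint union, you split it into the sub-union of components landing in each part, observe that one of the two pieces cannot map into $D_i$, apply the duality pair $(\As_i,\Ds_i)$ to \emph{that piece} to produce a member of $\As_i$ mapping into it, and then collapse via the antichain property. This is what powers both your ``separating observation'' (no member of $\As_1'\cup\As_2'$ maps into $D_1\cup D_2$, while every type-3 member does) and your type-3-to-type-3 comparison ($A_1\to B_1$ and $A_2\to B_2$, hence equality). The trade-off: the paper's $S_i$-invariant makes the final case analysis almost mechanical once connectivity is in hand, whereas your argument is slightly longer at each comparison but is self-contained, does not invoke Lemma~\ref{lem:connected}, and in particular does not need the core hypothesis hidden in it. The endgame (the cross-equivalences between $\As_1'$ and $\As_2'$ form a partial matching, so deleting one graph per pair yields an antichain) coincides with the paper's.
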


\begin{proof} For (i) it is enough to note that $A_1\cup A_2\to B$ if and only if $A_1\to B$ and $A_2\to B$.

For (ii) take $A_2\in\As_2'$ and a graph $A_1\in\As_1$ with $A_1\to A_2$. As $A_1\cup A_2$ is equivalent to $A_2$ we can put $A_2$ into the left side of the duality pair $(\As,\Ds)$. But then all graphs $A_1'\cup A_2$ can be removed from there as $A_2$ maps to these graphs. Doing this for all $A_2\in\As_2'$ and similar changes for the graphs in $\As_1'$ one obtains $\As'$ and (ii) is proved.

To prove (iii) take $A\in\As'$ and consider the sets $S_i(A)=\{B\in\As_i\mid B\to A\}$ for $i=1,2$. For a graph $A=A_1\cup A_2$ with $A_1\in\As_1''$, $A_2\in\As_2''$ we have $S_1(A)=\{A_1\}$ and $S_2(A)=\{A_2\}$ since the graphs in $\As_1\bigcup\As_2$ are connected (Lemma~\ref{lem:connected}), thus they map to $A$ if and only if they map to
$A_1$ or $A_2$. For $A\in\As_i'$ and $i=1$ or $2$ we have $S_i(A)=\{A\}$. Since $A\to A'$ implies $S_i(A)\subseteq S_i(A')$ for $A,A'\in\As'$ and $i=1,2$ the only possibility of such a map with $A\ne A'$ is $A\in\As_i'$ and $A'\in\As_{3-i}'$. From $A\in\As_i'$ we have
$A''\in\As_{3-i}$ with $A''\to A\to A'$. As $\As_{3-i}$ is an antichain we must have $A''=A$ and thus $A$ and $A'$ are equivalent.
\end{proof}

We can apply this lemma to combine any two of the several examples of infinite-finite duality pairs in this paper or even one such example with a simple duality with a single tree on the left hand side. We chose the duality pairs $(\{P_k^4\mid k\ge0\},\{D_4\})$ from Example~\ref{ex:pd} and $(T_2,\{G_2\})$ from Theorem~\ref{th:fas}. Note that $P_0^4$ is the directed path with five edges and it appears in $T_2$ but no homomorphism exist from a member of $T_2$ to some $P_k^4$ with $k\ge1$ or vice versa. Thus from   Lemma~\ref{lem:combine} we get the following

\begin{example}
The following is an antichain duality pair of core graphs:
$$
\big(\{P^4_0\}\cup\big\{P^4_k\cup A\mid k\ge1, A\in  T_2\setminus \{P^4_0\}\big\},\{D_4,G_2\}\big).
$$
\end{example}

\bibliographystyle{plain}

\end{document}